%Maass-Poincar\'{e} Series
%2012/04/21

\documentclass[11pt]{amsart}

\usepackage{amsmath,amscd,amssymb,latexsym}

\oddsidemargin = 0cm \evensidemargin = 0cm \textwidth = 16cm

\newcommand{\sm}{\left(\smallmatrix}
\newcommand{\esm}{\endsmallmatrix\right)}
\newcommand{\mat}{\begin{pmatrix}}
\newcommand{\emat}{\end{pmatrix}}

\renewcommand{\a}{\alpha}
\renewcommand{\b}{\beta}
\renewcommand{\i}{\infty}
\newcommand{\G}{\Gamma}
\newcommand{\g}{\gamma}
\newcommand{\ve}{\varepsilon}
\newcommand{\vp}{\varphi}
\renewcommand{\i}{\infty}
\newcommand{\lt}{\left}
\newcommand{\rt}{\right}

\newcommand{\Z}{\mathbb Z}
\newcommand{\C}{\mathbb C}

\renewcommand{\H}{\mathbb H}

\newcommand{\M}{\mathcal M}
\newcommand{\W}{\mathcal W}
\newcommand{\pds}{\frac{\partial}{\partial s}}
\newtheorem{thm}{Theorem}
\newtheorem{lem}[thm]{Lemma}
\newtheorem{cor}[thm]{Corollary}
\newtheorem{prop}[thm]{Proposition}

\theoremstyle{remark}

\numberwithin{equation}{section}
\numberwithin{thm}{section}
\begin{document}

\title[Weak Maass-Poincar\'{e} Series]{Weak Maass-Poincar\'{e} series\\ and weight $3/2$ mock modular forms}

\author{Daeyeol Jeon, Soon-Yi Kang and Chang Heon Kim}
\address{Department of Mathematics Education, Kongju National University, Kongju, 314-701, Korea}
\email{dyjeon@kongju.ac.kr}
\address{Department of Mathematics, Kangwon National University, Chuncheon, 200-701, Korea} \email{sy2kang@kangwon.ac.kr}
\address{Department of Mathematics, Hanyang University,
 Seoul, 133-791 Korea}
\email{chhkim@hanyang.ac.kr}

 \begin{abstract} The primary goal of this paper is to construct the basis of the space of weight $3/2$ mock modular forms, which is an extension of the Borcherds-Zagier basis of weight $3/2$ weakly holomorphic modular forms. The shadows of the members of this basis form the Borcherds-Zagier basis of the space of weight $1/2$ weakly holomorphic modular forms. In the course of the construction, we provide a full computation of the Fourier coefficients for the weak Maass-Poincar\'{e} Series in most general form for the purpose of future reference.
\end{abstract}
\maketitle

\renewcommand{\thefootnote}%
             {}
 \footnotetext{
 2010 {\it Mathematics Subject Classification}: 11F03, 11F12, 11F30, 11F37
 \par
 {\it Keywords}: harmonic weak Maass forms, mock modular forms, weak Maass-Poincar\'{e} Series, Borcherds-Zagier bases
}
\section{Introduction}\label{intro}

  A {\it{weak Maass form}} of weight $k$ for the group  $\G_0(N)$ is a smooth function defined in the upper half plane which is invariant under the action of all elements in $\G_0(N)$ and is an eigenform of the weight $k$ hyperbolic Laplace operator. In addition, a weak Maass form has at most exponential growth at all cusps (See \cite[Section 3]{BF}). We call a weak Maass form {\it{harmonic}} if it is annihilated by the hyperbolic Laplacian and call a harmonic weak Maass form a {\it{weakly holomorphic modular form}} if it is holomorphic in the upper half plane with possible poles at the cusps. In fact, by its definition, the image of a harmonic weak Maass form under the differential operator $\displaystyle{\xi_k:=2iy^{k}\frac{\overline{\partial }}{\partial \bar{z}}}$ is always a weakly holomorphic modular form of weight $2-k$.
For $k\in \frac12 \Z$, let $M^!_k(N)$ denote the infinite dimensional vector space of weight $k$ weakly holomorphic modular forms on $\G_0(N)$ and $M^!_k$ denote the subspace of $M^!_k(4)$, in which each form satisfies Kohnen's plus space condition, that is, its Fourier expansion has the form $\sum a(n)q^n $ where $a(n)$ is non-zero only for integers $n$ satisfying $(-1)^{k-1/2}n\equiv 0,1 \pmod 4$. There are natural infinite bases $\{f_d|d\equiv 0,1\pmod 4,\  d\leq 0\}$ and $\{g_D|D\equiv 0,3\pmod 4,\ D<0\}$ for $M^!_\frac12$ and $M^!_\frac32$, respectively, where
$f_d$ and $g_D$ have Fourier developments of the forms
\begin{equation}\label{fn}f_d(z)=q^{d}+\sum_{D<0} A(D,d)q^{|D|} \in M^!_\frac12\end{equation} and
\begin{equation}\label{gm}g_D(z)=q^{D}+\sum_{d\leq 0} B(D,d)q^{|d|}\in M^!_\frac32\end{equation}
where $q=\exp(2\pi iz)$ and $z=x+iy\in\H$, the upper half plane.  Surprisingly, $A(D,d)=-B(D,d)$ as proved by Zagier in \cite{Zagier}.

Recently, Duke, Imamo\={g}lu and  T\'{o}th \cite{DIT} extended the basis $\{f_d|d\equiv 0,1\pmod 4,\  d\leq 0\}$ for $M^!_\frac12$ to a basis $\{h_d|d\equiv 0,1 \pmod 4 \}$ for  $H^!_\frac12$, the space of weight $1/2$ \textit{weakly harmonic Maass forms} on $\G_0(4)$, which are harmonic weak Maass forms satisfying the plus space condition. They showed that $h_d=f_d$ for $d\leq 0$ and  $\xi_{1/2}h_d=-2\sqrt{d}g_{-d}$ for $d>0$.

In fact, every harmonic weak Maass form $h(z)$ is decomposed to $h(z)=h^+(z)+h^-(z)$, where $h^+(z)$ is holomorphic and $h^-(z)$ is non-holomorphic on $\H$. If $h$ has weight $k$, following Zagier \cite{Z1}, we call the holomorphic part $h^+(z)$ \textit{a mock modular form of weight $k$} and the image of the non-holomorphic part $h^-$ under $\xi_k$
\textit{the shadow} of the mock modular form $h^+$. Note that $\xi_k(h)=\xi_k(h^-)$. Hence in \cite{DIT}, Duke, Imamo\={g}lu and  T\'{o}th eventually constructed the weight $1/2$ mock modular form $f_d$ with shadow $g_{-d}$ for each $0<d\equiv 0,1 \pmod 4$.  Furthermore, they found quite a few of interesting properties of the coefficients of the mock modular form $f_d$ such as a relation with cycle integrals of the $j$-function and with modular integrals having rational period functions.

A natural question at this point is whether there exists a mock modular form $g_D$ having $f_{-D}$ as its shadow for each $0\leq D\equiv 0,3 \pmod 4$ and the duality relation between $f_d$ ($d\leq 0$) and $g_D$ ($D<0$) is preserved for $f_d$ ($d>0$) and $g_D$ ($D\geq 0$), if there is any $g_D$ ($D\geq 0$). The existence of such mock modular forms is proved in \cite[Theorem 3.7]{BF} and \cite[Theorem 1.1]{Bru} without an explicit construction. The primary goal of this paper is thus to construct the desired mock modular forms $g_D$.

\begin{thm}\label{mainmock}  There exists a unique basis $\{g_{D}|D\equiv 0,3 \pmod 4,\ D\geq 0 \}$ for the space of mock modular forms of weight $3/2$ satisfying that each $g_D$ has shadow $f_{-D}$ and has a Fourier expansion of the form
\begin{equation}\label{gmm}g_D(z)=\sum_{n\geq 0} b(D,n)q^{n}.\end{equation}

\end{thm}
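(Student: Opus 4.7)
The strategy is to realize each $g_D$ explicitly as the holomorphic part of a weight $3/2$ weak Maass-Poincar\'e series on $\G_0(4)$ projected to Kohnen's plus space, giving a constructive refinement of the abstract existence results of \cite{BF,Bru} in the spirit of the weight $1/2$ construction of Duke-Imamo\={g}lu-T\'oth. For each admissible $D\geq 0$, I would introduce a Poincar\'e-type series of the form
\begin{equation*}
\M_D(z,s) \;=\; \sum_{\g\in \G_\infty\backslash \G_0(4)} \vp_{D,s}(z)\big|_{3/2}\g,
\end{equation*}
where for $D>0$ the seed $\vp_{D,s}$ is built from an $M$-Whittaker function in the variable $4\pi D y$ multiplied by $e^{2\pi i D x}$, normalized so that at a distinguished spectral value $s=s_0$ the series is annihilated by the weight $3/2$ hyperbolic Laplacian; the case $D=0$ is handled by the analogous Maass-type Eisenstein series of weight $3/2$. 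Once convergence for $\operatorname{Re}(s)\gg 0$ and meromorphic continuation to $s_0$ are in place, $\M_D(z,s_0)$ is a harmonic weak Maass form of weight $3/2$ on $\G_0(4)$.

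The next step is the explicit computation of the Fourier expansion of $\M_D(z,s_0)$. This is carried out by the standard unfolding argument and Bruhat decomposition of $\G_0(4)$, expressing each Fourier coefficient as an infinite series of half-integral weight Kloosterman sums against Bessel functions, together with their analytic continuation. Separating the expansion into its holomorphic and non-holomorphic parts and then applying Kohnen's plus-space projector yields the candidate $g_D$. The shadow condition is then checked directly: $\xi_{3/2}\M_D^-$ is recognized as a weight $1/2$ Maass-Poincar\'e series whose only principal part is $q^{-D}$, and by the uniqueness of $f_{-D}\in M^!_{1/2}$ with this principal part, it coincides with $f_{-D}$ up to a normalizing constant that I would absorb into the final definition of $g_D$.

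For uniqueness, suppose $g$ is another weight $3/2$ mock modular form in the plus space having shadow $f_{-D}$ and a Fourier expansion of the asserted form $\sum_{n\geq 0}c(n)q^n$. Then $g_D - g$ has vanishing shadow and hence is a holomorphic modular form of weight $3/2$ in the plus space on $\G_0(4)$ with prescribed vanishing at $\infty$; a direct dimension count in this plus space forces $g_D - g = 0$. The basis property itself then follows from the triangular Fourier shape, $b(D,D)=1$ and $b(D,n)=0$ for $0\leq n<D$, that is built into the Poincar\'e construction.

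I expect the main technical obstacle to lie in the Fourier coefficient computation: the explicit evaluation and analytic continuation of the weight $3/2$ Poincar\'e series, the careful bookkeeping of half-integral weight Kloosterman sums with the correct multiplier system on $\G_0(4)$, and the compatibility with Kohnen's plus-space projection. As the abstract indicates, carrying out this computation in full generality is precisely the content advertised as being of independent interest for future reference.
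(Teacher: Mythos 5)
Your overall framework (weight $3/2$ Maass--Poincar\'e series on $\G_0(4)$, Kohnen plus-space projection, Fourier expansion via Kloosterman sums and Whittaker/Bessel integrals, uniqueness from the triviality of holomorphic weight $3/2$ plus-space forms) matches the paper's. But there is a genuine gap at the central step. For $D>0$ the specialization of the projected Poincar\'e series at the harmonic point $s_0=k/2=3/4$ is \emph{identically zero}: at $s=3/4$ the seed $\mathcal M_{D,3/2}(y,3/4)e(Dx)$ decays like $q^D$ and all Whittaker coefficients with $n\le 0$ vanish, so $F_D^+(z,3/4)$ would be a weight $3/2$ cusp form in the plus space, of which there are none (this is Proposition \ref{prop1}(1)). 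So your object $\M_D(z,s_0)$ is the zero function and cannot have shadow $f_{-D}$. (The other harmonic point $s=1-k/2=1/4$ lies outside the half-plane $\mathrm{Re}(s)>1/2$ of analytic continuation, so it is not available either.) The paper's way around this --- and the idea your proposal is missing --- is to differentiate in $s$: since $\Delta_{3/2}F_D^+(z,s)=(s-\tfrac34)(\tfrac14-s)F_D^+(z,s)$ and $F_D^+(z,3/4)=0$, the derivative $\frac{\partial}{\partial s}F_D^+(z,s)|_{s=3/4}$ is still annihilated by $\Delta_{3/2}$ and is nontrivial; this is the analogue of Duke--Imamo\={g}lu--T\'oth's treatment of weight $2$ via $\partial_s F_{m,2,1}(z,s)|_{s=1}$.

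A second, smaller issue is your identification of the shadow. The paper computes
$\xi_{3/2}\bigl(\frac{\partial}{\partial s}F_D^+(z,s)|_{s=3/4}\bigr)=\frac{1}{\sqrt{4\pi D}}f_{-D}(z)+\frac{6H(D)}{\sqrt{\pi D}}\theta(z)$,
so the shadow of the $s$-derivative is \emph{not} proportional to $f_{-D}$: knowing only that its principal part is $q^{-D}$ determines it merely up to adding a multiple of $\theta=f_0$, and here that multiple is genuinely nonzero (it involves the Hurwitz class number $H(D)$). This cannot be absorbed into a scalar normalization; it must be removed by an explicit additive correction, namely subtracting $8\sqrt{\pi/D}\,H(D)\,F_0^+(z,3/4)$ (a multiple of Zagier's weight $3/2$ Eisenstein series), before one obtains a harmonic weak Maass form whose shadow is exactly $f_{-D}$ and whose holomorphic part gives $g_D$. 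Your uniqueness argument and the reduction of the basis property to the triangular Fourier shape are fine once these two points are repaired.
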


The recent advances in the theory of weak Maass forms have come along with rapid development in areas of mock modular forms, traces of CM values, and other related subjects. In many of such work, weak Maass-Poincar\'{e} Series played a key role (for example, \cite{BO1,BO2,Br,BJO,DIT,KK,MP}). While we will discuss arithmetic properties of the coefficients of $g_D$ ($D\geq 0$) in the subsequent paper \cite{JKK-sesqui}, in the course of proving Theorem \ref{mainmock} we provide a full computation of the Fourier coefficients for the weak Maass-Poincar\'{e} Series in most general form for the purpose of future reference.

We let $H^!_k(N)$ denote the space of harmonic weak Maass forms for $\G_0(N)$ and $H_k(N)$ denote its subspace, in which the image of each form under $\xi_k$ is a cusp form. As usual, $M_k(N)$ and $S_k(N)$ denote the spaces of classical modular forms and cusp forms for $\G_0(N)$, respectively.

\section*{Acknowledgement}

The authors give many thanks \"{O}zlem Imamo\={g}lu for her constructive suggestion.

\section{Whittaker functions}
We first recall many properties of Whittaker functions, with which we construct weak Maass-Poincar\'{e} Series as in  \cite{BO1,BO2,Br,BJO,DIT,KK,MP}. Whittaker functions $M_{\mu,\nu}(y)$ and $W_{\mu,\nu}(y)$ are as defined in \cite[ch.13]{AS}, linearly independent solutions of the Whittaker differential equation
\begin{equation}\label{de}
\frac{d^2w}{dy^2}+\lt(-\frac 14+\frac{\mu}{y}+\frac{\frac 14-\nu^2}{y^{2}}\rt)w=0.\end{equation}
 These functions can be expressed in terms of confluent hypergeometric functions as
\begin{eqnarray}\label{Kummer}
&M_{\mu,\nu}(y)=e^{-y/2}y^{\nu+1/2}M(\nu-\mu+\frac12,1+2\nu,y)\\
&W_{\mu,\nu}(y)=e^{-y/2}y^{\nu+1/2}U(\nu-\mu+\frac12,1+2\nu,y),
\end{eqnarray}
where
\begin{equation}\label{hypM}
M(a,b,x)=\sum_{n=0}^\i\frac{(a)(a+1)(a+2)\cdots(a+n-1)}{(b)(b+1)(b+2)\cdots(b+n-1)}\frac{x^n}{n!}
\end{equation}
and
\begin{equation}\label{hypU}
U(a,b,x)=\frac{\G(1-b)}{\G(a-b+1)}M(a,b,x)+\frac{\G(b-1)}{\G(a)}x^{1-b}M(a-b+1,2-b,x).
\end{equation}
Here $\G(z)$ is the Gamma function. The equation (\ref{hypU}) implies, if $2\nu \notin \mathbb Z$,
\begin{equation}\label{mwrel}
W_{\mu,\nu}(y)=\frac{\G(-2\nu)}{\G(\frac 12-\nu-\mu)}M_{\mu,\nu}(y)+\frac{\G(2\nu)}{\G(\frac 12+\nu-\mu)}M_{\mu,-\nu}(y)\end{equation}
 and in particular, $W_{\mu,\nu}(y)=W_{\mu,-\nu}(y)$.

The Whittaker functions may have integral representations for certain fixed values of $\mu$ and $\nu$. If $\mathrm{Re}(\nu\pm \mu+1/2)>0$, then
\begin{equation}\label{im}
M_{\mu,\nu}(y)=y^{\nu+1/2}e^{y/2}\frac{\G(1+2\nu)}{\G(\nu+\mu+1/2)\G(\nu-\mu+1/2)}\int_0^1t^{\nu+\mu-1/2}(1-t)^{\nu-\mu-1/2}e^{-yt}dt\end{equation}
and if $\mathrm{Re}(\nu-\mu+1/2)>0$, then
\begin{equation}\label{iw}
W_{\mu,\nu}(y)=y^{\nu+1/2}e^{y/2}\frac{1}{\G(\nu-\mu+1/2)}\int_1^\i t^{\nu+\mu-1/2}(t-1)^{\nu-\mu-1/2}e^{-yt}dt.\end{equation}
It follows from (\ref{im}) and (\ref{iw}) that if $\nu-\mu=1/2$, then we have
\begin{equation}\label{wmw}
M_{\mu,\nu}(y)+(2\mu+1)W_{\mu,\nu}(y)=\G(2\mu+2)y^{-\mu}e^{y/2}, \end{equation} and from (\ref{iw}) that if $\nu+\mu=1/2$, we have
\begin{equation}\label{wws}W_{\mu,\nu}(y)=y^\mu e^{-y/2}.\end{equation}
If $\frac12-\mu\pm \nu$ is an integer, Whittaker functions can be expressed as an incomplete gamma function $\G(a,x)=\int_x^\i e^{-t}t^{a-1} dt$. For example, if $\nu\in \frac 12\Z$, then
\begin{equation}\label{wi}
W_{\nu-1/2,\nu}(y)=e^{y/2}y^{1/2-\nu}\G(2\nu,y).\end{equation}
Asymptotic behavior of the Whittaker functions for fixed $\mu$, $\nu$ is also found from (\ref{im}) and (\ref{iw}):
\begin{equation}\label{wasy}
M_{\mu,\nu}(y)\sim \frac{\G(1+2\nu)}{\G(\nu-\mu+1/2)}y^{-\mu}e^{y/2} \quad \mathrm{and}\quad
W_{\mu,\nu}(y)\sim y^{\mu}e^{-y/2}\quad \mathrm{as}\quad y\to \i,\end{equation}
\begin{equation}\label{masy}
M_{\mu,\nu}(y)\sim y^{\nu+1/2} \quad \mathrm{and}\quad
W_{\mu,\nu}(y)\sim \frac{\G(2\nu)}{\G(\nu-\mu+1/2)}y^{-\nu+1/2}\quad \mathrm{as}\quad y\to 0\end{equation}

Now we define for fixed values $s\in \mathbb{C}$ and $n\in \mathbb{Z}$,
\begin{eqnarray}\label{defmnwn}
&&\mathcal{M}_{n,k}(y,s)=\left\{
                     \begin{array}{ll}
                       \G(2s)^{-1}(4\pi |n|y)^{-k/2}M_{\frac k2\mathrm{sgn}(n),s-1/2}(4\pi |n|y), & \hbox{if $n\neq 0$,} \\
                       y^{s-k/2}, & \hbox{if $n=0$.}
                     \end{array}
                   \right.\\ \nonumber
&&\mathcal{W}_{n,k}(y,s)=\left\{
  \begin{array}{ll}
   \G(s+\frac k2\mathrm{sgn}(n))^{-1}|n|^{k/2-1}(4\pi y)^{-k/2}W_{\frac k2\mathrm{sgn}(n),s-1/2}(4\pi |n| y), &  \hbox{if $n\neq 0$,}\\
   \frac{(4\pi)^{1-k}y^{1-s-k/2}}{(2s-1)\G(s-k/2)\G(s+k/2)}, &  \hbox{if $n=0$.}
  \end{array}
\right.\end{eqnarray}
These are generalizations of the functions treated in \cite[Chapter 1.3]{Br}, \cite[Section 3]{BJO} and \cite[Section 2]{DIT} preserving  important properties of them. In particular, if $e(z)=\exp(2\pi i z)$, then the function
\begin{equation}\label{defp}\vp_{m,k}(z,s):=\mathcal{M}_{m,k}(y,s)e(mx)\end{equation} is an eigenfunction of the weight $k$ hyperbolic Laplacian
$$\Delta_k=-y^2\lt(\frac{\partial^2}{\partial x^2}+\frac{\partial^2}{\partial y^2}\rt)+iky\lt(\frac{\partial}{\partial x}+i\frac{\partial}{\partial y}\rt)$$
and has eigenvalue $s(1-s)+(k^2-2k)/4$ by (\ref{de}). That is,
\begin{equation}\label{hypm}
\Delta_k\vp_{m,k}(z,s)=\lt(s-\frac{k}{2}\rt)\lt(1-\frac k2-s\rt)\vp_{m,k}(z,s).\end{equation}
Also, due to the asymptotic behavior of the Whittaker function given in (\ref{masy}),
\begin{equation}\label{asymphi}\vp_{m,k}(z,s)=O(y^{\mathrm{Re}(s)-k/2})\quad \mathrm{as}\quad y\to 0.\end{equation}

We are interested in the values of $s=k/2$ and $s=1-k/2$, for which $\Delta_k\vp_{m,k}(z,s)=0$. It follows from (\ref{defmnwn}) and (\ref{wws}) for the case $n>0$, from  (\ref{defmnwn}) and (\ref{iw}) for the case $n<0$ and from  (\ref{defmnwn}) for the case $n=0$ that
\begin{equation}\label{w1-k2}
\mathcal{W}_{n,k}(y,1-k/2)=e^{-2\pi ny}\left\{
  \begin{array}{ll}
   n^{k-1}, &  \hbox{if $n> 0$,}\\
  |n|^{k-1} \G(1-k)^{-1}\G(1-k,-4\pi n y), &  \hbox{if $n<0$,}\\
  \frac{(4\pi)^{1-k}}{\G(2-k)} , &  \hbox{if $n=0$.}
  \end{array}
\right.
\end{equation}
If we assume $k\leq 1/2$, then by using  (\ref{defmnwn}) and (\ref{im}), we find that
\begin{equation}\label{m1-k2}
 \mathcal{M}_{n,k}(y,1-k/2)=e^{-2\pi ny} \left\{
  \begin{array}{ll}
  (-1)^{k}\lt[\G(1-k)^{-1}\G(1-k,-4\pi n y)-1\rt], & \hbox{ if $n>0$,}\\
  1-\G(1-k)^{-1}\G(1-k,-4\pi n y), & \hbox{ if $n<0 $,}\\
     y^{1-k}, & \hbox{ if $n=0$.}\\
  \end{array}
\right.\end{equation}
Also, using the fact $W_{\mu,\nu}(y)=W_{\mu,-\nu}(y)$ and (\ref{wws}) for the case $n>0$ and directly from (\ref{defmnwn}) for other cases, we have
\begin{equation}\label{wk2}
\mathcal{W}_{n,k}(y,k/2)=e^{-2\pi ny}\left\{
  \begin{array}{ll}
   \G(k)^{-1}n^{k-1}, &  \hbox{if $n> 0$,}\\
    0, &  \hbox{if $n\leq 0$.}
     \end{array}
\right.
\end{equation}
Finally, using (\ref{defmnwn}) and (\ref{Kummer}), we obtain
\begin{equation}\label{mk2}
\mathcal{M}_{n,k}(y,k/2)=e^{-2\pi ny}\left\{
  \begin{array}{ll}
   \G(k)^{-1}, &  \hbox{if $n\neq 0$,}\\
       1 , &  \hbox{if $n=0$.}
  \end{array}
\right.
\end{equation}

\section{Weak Maass-Poincar\'{e} series}
Consider the group $$\mathfrak{G}:=\{(A,\phi(z))| A=\sm a & b \\ c & d \esm\in GL^+_2(\mathbb{R}),
\ \phi:\mathbb{H}\to \mathbb{C}\ \mathrm{hol},\ |\phi(z)|=(\det A)^{-1/4}|cz+d|^{1/2}\}$$
with group law $(A_1,\phi_1(z))(A_2,\phi_2(z))=(A_1A_2,\phi_1(A_2z)\phi_2(z)).$ $\mathfrak{G}$ acts on complex valued functions $f$ defined on the upper half plane by $(f|_k(A,\phi)(z))=\phi(z)^{-2k}f(Az).$
For $\g=\sm a & b \\ c & d\esm \in \Gamma_0(N)$, define $j(\g,z)$ by
\begin{equation*}\label{j}j(\g,z)=\left\{
  \begin{array}{ll}
    \sqrt{cz+d}, & \hbox{if $k\in\mathbb{Z}$,} \\
    \lt(\frac cd\rt)\ve_d^{-1}\sqrt{cz+d}, & \hbox{if $k\in\frac12 \mathbb{Z}\backslash \mathbb{Z}$,}
  \end{array}
\right.\end{equation*}
where $\sqrt z$ is the principal branch of the holomorphic square root and
\begin{equation*}\label{ve}\ve_d:=\left\{
                     \begin{array}{ll}
                       1, & \hbox{ if $d\equiv 1 \pmod 4$,} \\
                       i, & \hbox{ if $d\equiv 3 \pmod 4$.}
                     \end{array}
                   \right.\end{equation*}
The map $\G_0(N)\to \mathfrak{G}$ defined by $\g\mapsto \widetilde{\g}=(\g,j(\g,z))$ is a group homomorphism. For convenience,  we write $f|_k\g$ instead of $f|_k\widetilde{\g}$. Namely, the weight $k$ slash operator is given by
\begin{equation*}\label{slash}
(f|_k{\g})(z)=j(\g,z)^{-2k}f(\g z).
\end{equation*}

If $\vp:=\vp_{m,k}(z,s)$ is the function defined in (\ref{defp}) and  $\G_\i=\{\pm\sm 1&n\\0&1\esm|n\in \mathbb{Z}\}$ is the subgroup of translations of $\G_0(N)$, then the Poincar\'{e} series
\begin{equation}\label{poincF}F_{m,k,N}(z,s):=
\sum_{\g\in \G_\i\backslash\G_0(N)}(\vp|_k \g)(z)\end{equation}
 is $\G_0(N)$-invariant and it follows from (\ref{asymphi}) that  $F_{m,k,N}(z,s)$ converges absolutely and uniformly on compacta for $\mathrm{Re}(s)>1$. Moreover, $F_{m,k,N}(z,s)$ is an eigenfunction of the Laplacian $\Delta_k$. In proving these, we use the same arguments with those in \cite{Br} and \cite[p.~8]{DIT}.

\begin{lem}
Let $m\in \mathbb Z$ and $s\in \mathbb C$. The Poincar\'{e} series $F_{m,k,N}(z,s)$ converges absolutely and uniformly on compacta for $\mathrm{Re}(s)>1$, and it is a $\G_0(N)$-invariant eigenfunction of the Laplacian $\Delta_k$ satisfying
\begin{equation}\label{hypp}
\Delta_kF_{m,k,N}(z,s)=\lt(s-\frac{k}{2}\rt)\lt(1-\frac k2-s\rt)F_{m,k,N}(z,s).\end{equation} Thus $F_{m,k,N}(z,s)$ is real analytic. \end{lem}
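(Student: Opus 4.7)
The statement has three independent parts to verify: (i) absolute and uniform convergence on compacta for $\mathrm{Re}(s)>1$, (ii) $\G_0(N)$-invariance together with the eigenvalue equation, and (iii) real analyticity. The $\G_0(N)$-invariance is essentially formal once (i) is in hand: because $\g\mapsto\widetilde{\g}$ is a group homomorphism, the sum in (\ref{poincF}), viewed over a left coset space, is manifestly right-invariant under the slash action of $\G_0(N)$ provided the series converges absolutely. One checks along the way that $\vp_{m,k}(z,s)$ is itself $\G_{\i}$-invariant under $|_k$, using the periodicity of $e(mx)$ in $x$ and the fact that $\M_{m,k}(y,s)$ depends only on $y$.

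For convergence I would fix a compact set $K\subset \H$ and estimate a single term. Writing $\g=\sm a & b \\ c & d\esm$, one has $|j(\g,z)|^{-2k}=|cz+d|^{-k}$ and $\mathrm{Im}(\g z)=y/|cz+d|^{2}$. By (\ref{asymphi}) there is a constant $C$, depending on $s,k,m$ and on an auxiliary $Y_{0}>0$, such that $|\vp_{m,k}(w,s)|\leq C\,(\mathrm{Im}\,w)^{\mathrm{Re}(s)-k/2}$ whenever $0<\mathrm{Im}\,w\leq Y_{0}$. On $K$ the quantity $\mathrm{Im}(\g z)=y/|cz+d|^{2}$ exceeds $Y_{0}$ only for the finitely many cosets with $|cz+d|$ uniformly small, and these contribute a bounded finite sum that is harmless. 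For every remaining coset,
\begin{equation*}
|(\vp_{m,k}|_{k}\g)(z)|\;\leq\; C\,|cz+d|^{-k}\lt(\frac{y}{|cz+d|^{2}}\rt)^{\mathrm{Re}(s)-k/2}\;=\;C\,y^{\mathrm{Re}(s)-k/2}\,|cz+d|^{-2\mathrm{Re}(s)}.
\end{equation*}
The classical Eisenstein majorant $\sum_{(c,d)}|cz+d|^{-2\sigma}$ converges uniformly on $K$ for $\sigma=\mathrm{Re}(s)>1$, yielding (i).

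For (ii), the weight $k$ slash action commutes with $\Delta_{k}$, so by (\ref{hypm}) each term satisfies $\Delta_{k}(\vp_{m,k}|_{k}\g)=(s-k/2)(1-k/2-s)(\vp_{m,k}|_{k}\g)$. To pass $\Delta_{k}$ past the sum, I would verify uniform convergence on compacta of the termwise first and second partial derivatives. Differentiating $j(\g,z)^{-2k}$ in $z$ introduces only additional factors of $|cz+d|^{-1}$, and differentiating $\vp_{m,k}$ in either variable leaves intact the cuspidal bound $y^{\mathrm{Re}(s)-k/2}$ coming from the Whittaker asymptotics (\ref{masy}). Hence the differentiated series is controlled by the same Eisenstein majorant (in fact a slightly better one), and termwise differentiation is legitimate, giving (\ref{hypp}). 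Part (iii) is then immediate: $\Delta_{k}$ is elliptic with real analytic coefficients, so any smooth solution of $(\Delta_{k}-\l)F=0$ is real analytic by the standard elliptic regularity theorem.

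The principal obstacle is thus the uniform bounding of the termwise derivatives, since merely knowing absolute convergence of $F_{m,k,N}(z,s)$ is not enough to legitimize moving $\Delta_{k}$ inside the sum; one must confirm that differentiation does not worsen the cuspidal behavior of $\vp_{m,k}$, and this is where the explicit Whittaker bounds recalled in Section 2 are used essentially. The initial convergence estimate itself is a routine Eisenstein comparison and is not where the work lies.
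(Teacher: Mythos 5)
Your proposal is correct and follows essentially the same route as the paper: the paper obtains convergence and $\G_0(N)$-invariance by citing \cite[Lemma 3.1]{BO2} together with the bound (\ref{asymphi}) --- which is precisely the Eisenstein-majorant estimate $|(\vp_{m,k}|_k\g)(z)|\ll y^{\mathrm{Re}(s)-k/2}|cz+d|^{-2\mathrm{Re}(s)}$ that you carry out by hand --- and then applies $\Delta_k$ termwise, phrasing the commutation with the slash action via the factorization $\Delta_k=-\xi_{2-k}\circ\xi_k$ and the intertwining rule $\xi_k(f|_k\g)=(\xi_k f)|_{2-k}\g$ rather than invoking it directly as you do. Your only substantive additions are the explicit justification for interchanging $\Delta_k$ with the sum (uniform convergence of the termwise derivatives, where your phrase ``leaves intact the cuspidal bound'' is slightly loose --- each $y$-derivative costs a power of $y$, but the exponent of $|cz+d|$ in the majorant is unchanged, which is what matters on compacta) and the elliptic-regularity remark for real analyticity, both of which the paper leaves implicit.
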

\begin{proof} By \cite[Lemma 3.1]{BO2} and (\ref{asymphi}), if $k>2-2(\mathrm{Re}(s)-k/2)$, then $F_{m,k,N}(z,s)$ is convergent and $\G_0(N)$-invariant. On the other hand, the Laplace operator $\Delta_k$ can be expressed in terms of the differential operator $\xi_k$
\begin{equation}\label{Dx}\Delta_k=-\xi_{2-k}\circ \xi_k.\end{equation}
The operator $\xi_k$ and the slash operator commute under the rule  $\xi_k(f|_k g(z))=(\xi_k f)|_{2-k}(g z)$. Hence
if we write $F:=F_{m,k,N}(z,s)$ and $\Delta:=\Delta_k$, then
 \begin{eqnarray*}
 &&\Delta F=\Delta\sum(\vp|_k \g)=\sum(-\xi_{2-k}\circ\xi_k)(\vp|_k \g)=-\sum\xi_{2-k}(\xi_k(\vp|_k \g))\\
 &&\qquad =-\sum\xi_{2-k}(\xi_k(\vp)|_{2-k} \g)=\sum(\Delta\vp)|_k\g=(s(1-s)+(k^2-2k)/4)F, \end{eqnarray*}
where the last equality follows from (\ref{hypm}).  \end{proof}

The function $F_{0,1/2,4}(z,s)$ is the usual weight $1/2$ Eisenstein series and $F_{m,1/2,4}(z,s)$ for any integer $m$ is briefly discussed in \cite[Lemma 2]{DIT}. The Fourier expansion of the Poincar\'{e} series were computed earlier for negative integer $m$ and negative weight $k$ in \cite[Theorem 1.9]{Br} and for negative integer $m$ and weight $k\geq 2$ in \cite[Proposition 3.2]{BJO}. We now provide the Fourier expansion of $F_{m,k,N}(z,s)$ for any integer $m$, arbitrary weight $k$ and level $N$.

\begin{thm}\label{exp1}
If $m$ is an integer, then the Poincar\'{e} series $F_{m,k,N}(z,s)$ has the Fourier expansion
$$F_{m,k,N}(z,s)=\mathcal{M}_{m,k}(y,s)e(mx)+\sum_{n\in \mathbb{Z}} c_{m,k}(n,s)\mathcal{W}_{n,k}(y,s) e(nx),$$
where the coefficients $c_{m,k}(n,s)$ are given by
$$(2\pi i^{-k})\sum_{c>0} \frac{K_{k}(m,n,Nc)}{Nc}\times \left\{
    \begin{array}{ll}
      \displaystyle{|mn|^{\frac{1-k}{2}}J_{2s-1}\lt(\frac{4\pi\sqrt{|mn|}}{Nc}\rt)}, & mn>0, \\
      \displaystyle{|mn|^{\frac{1-k}{2}}I_{2s-1}\lt(\frac{4\pi\sqrt{|mn|}}{Nc}\rt)}, & mn<0, \\
    \displaystyle{2^{k-1}\pi^{s+\frac{k}{2}-1}|m+n|^{s-\frac{k}{2}}(Nc)^{1-2s}}, & mn=0, m+n\neq 0,
\\
    \displaystyle{2^{2k-2}\pi^{k-1}\Gamma(2s)(2Nc)^{1-2s}}, & m=n=0,
    \end{array}
  \right.
$$
where $J_\a$ and $I_\a$ are the usual Bessel functions as defined in \cite[Ch.~9]{AS} and $K_k(m,n,c)$ is the (generalized) Kloosterman sum defined by
\begin{equation}\label{kloos}
K_k(m,n,c):=\left\{
              \begin{array}{ll}
                \sum_{v (c)^*}e\left(\frac{m\bar{v}+nv}{c}\right), & \hbox{if $k\in\mathbb{Z}$,} \\
                \sum_{v (c)^*}\lt(\frac cv\rt)^{2k}\ve_v^{2k}e\left(\frac{m\bar{v}+nv}{c}\right), & \hbox{if $k\in\frac12 \mathbb{Z}\backslash \mathbb{Z}$.}
              \end{array}
            \right.
\end{equation}
Here the sum runs through the primitive residue classes modulo $c$ and $v\bar{v}\equiv 1 \pmod c$.
\end{thm}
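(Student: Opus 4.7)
The plan is to compute the Fourier expansion of $F_{m,k,N}(z,s)$ by the classical unfolding method of Rademacher and Petersson, reducing the task to the explicit evaluation of a single one-dimensional integral in four distinct cases.

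First I would isolate the identity coset and write
\begin{equation*}
F_{m,k,N}(z,s) - \vp_{m,k}(z,s) = \sum_{c>0}\ \sum_{\substack{d\in\Z \\ (d,Nc)=1}} (\vp_{m,k}|_k \gamma_{Nc,d})(z),
\end{equation*}
where $\gamma_{Nc,d}=\sm a & b \\ Nc & d\esm\in\G_0(N)$ has bottom row $(Nc,d)$. Using $\gamma_{Nc,d}z=\tfrac{a}{Nc}-\tfrac{1}{Nc(Ncz+d)}$, one expands $(\vp_{m,k}|_k\gamma_{Nc,d})(z)$ as an explicit product of $(Ncz+d)^{-k}$ (together with the multiplier $\ve_d^{2k}(\tfrac{Nc}{d})^{2k}$ in the half-integer case), a value of $\mathcal{M}_{m,k}$ at the imaginary part $\mathrm{Im}(\gamma_{Nc,d}z)=y/|Ncz+d|^2$, and two exponential phases coming from $\mathrm{Re}(\gamma_{Nc,d}z)$.

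Next, to compute the $n$-th Fourier coefficient, I would write $d=d_0+jNc$ with $d_0$ ranging over primitive residues modulo $Nc$ and $j\in\Z$; using $\gamma_{Nc,d_0+jNc}=\gamma_{Nc,d_0}T^j$, the $j$-sum together with $\int_0^1\cdots e(-nx)\,dx$ (valid since $e(-nx)$ is $1$-periodic for integer $n$) reassembles into $\int_{-\infty}^\infty\cdots e(-nx)\,dx$. The substitution $u=x+d_0/(Nc)$ then isolates the $d_0$-dependence into the phase $e(\tfrac{m\bar d_0+nd_0}{Nc})$ (where $ad_0\equiv1\pmod{Nc}$), which after being summed against the half-integer multiplier $\ve_{d_0}^{2k}(\tfrac{Nc}{d_0})^{2k}$ reassembles into the generalized Kloosterman sum $K_k(m,n,Nc)$ of \eqref{kloos}.

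What remains is the evaluation of the integral
\begin{equation*}
\mathcal{I}_c(n,y)=\int_{-\infty}^\infty (u+iy)^{-k}\,\mathcal{M}_{m,k}\!\lt(\tfrac{y}{(Nc)^2(u^2+y^2)},s\rt)e\!\lt(-\tfrac{mu}{(Nc)^2(u^2+y^2)}-nu\rt)du,
\end{equation*}
which I would split into four cases. When $mn\neq0$, I would insert the integral representation \eqref{im} for $M_{\mu,\nu}$, interchange the order of integration, and recognize the inner integral as a standard Bessel transform: the sign of $mn$ dictates whether the inner exponential is oscillatory or monotone, producing $J_{2s-1}(4\pi\sqrt{|mn|}/(Nc))$ for $mn>0$ and $I_{2s-1}(4\pi\sqrt{|mn|}/(Nc))$ for $mn<0$. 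When exactly one of $m,n$ vanishes, only a single exponential survives and a classical Mellin-type evaluation gives $(Nc)^{1-2s}|m+n|^{s-k/2}$; when $m=n=0$ the integral degenerates to a beta-function evaluation yielding $\G(2s)(2Nc)^{1-2s}$. In each case the residual $y$-dependence packages precisely into $\mathcal{W}_{n,k}(y,s)e(nx)$ via \eqref{defmnwn}, and the overall prefactor $2\pi i^{-k}$ emerges from the branch of $(u+iy)^{-k}$ together with the standard Bessel integral representations in \cite[Ch.~9]{AS}.

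The principal obstacle is this last step: handling the four cases uniformly and carefully tracking the Gamma-function normalizations so that the residue assembles cleanly into $\mathcal{W}_{n,k}(y,s)$ with the correct Bessel index $2s-1$ and the correct choice of $J$ versus $I$. The first two steps are formal and parallel the computations of \cite{Br} and \cite{BJO}, which treated only sub-ranges of $(m,k)$; the present theorem requires carrying out the full evaluation with no sign or weight restrictions.
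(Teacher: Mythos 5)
Your proposal follows essentially the same route as the paper: the Kohnen-style unfolding over bottom rows $(Nc,d)$, the extraction of the Kloosterman sum $K_k(m,n,Nc)$ from the $d$-sum, and the reduction of everything to a single archimedean integral are exactly the steps of the paper's proof of Theorem \ref{exp1} via Lemma \ref{gendit}. The only divergence is at your self-identified ``principal obstacle'': after the substitution $x=-\mathrm{sgn}(m)yu$ the paper disposes of that integral by quoting Hejhal's Lemma 5.5 (Propositions \ref{Hejhal1} and \ref{Hejhal2}) rather than re-deriving the Bessel evaluations from the integral representation (\ref{im}) --- a derivation which, done by hand, is valid only for $\mathrm{Re}(s)>|k|/2$ and so needs an extra continuation step when $|k|$ is large.
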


\begin{proof} By the exactly same argument in the proof of \cite[Proposition 4]{Kohnen}, we have
$$F_{m,k,N}(z,s)=\mathcal M_{m,k}(y,s)e(mx)+\sum_{c>0,d(Nc)^*}C(c,d,N)\sum_{r\in\Z}\lt(Nc(z+r)+d\rt)^{-k}
\varphi_{m,k}\lt(\frac{a(z+r)+b}{Nc(z+r)+d},s\rt),$$ where
$C(c,d,N)=1$ if $k$ is an integer and $C(c,d,N)=\lt(\frac{Nc}{d}\rt)\lt(\frac{-4}{d}\rt)^k$ if $k$ is not an integer but a half integer.
Now the theorem follows from Lemma \ref{gendit} below.
\end{proof}

 It follows from (\ref{hypp}) that $\Delta_kF_{m,k,N}(z,s)=0$ for the special $s$ values $s=k/2$ and $s=1-k/2$. If $k\leq 1/2$, then $F_{m,k,N}(z,s)$ is holomorphic in $s$ near $s=1-k/2$ and if  $k\geq 3/2$, then $F_{m,k,N}(z,s)$ is holomorphic in $s$ near $s=k/2$. In fact, $F_{m,k,N}(z,k/2)$ is a weakly holomorphic modular form if $k> 2$ and $F_{m,k,N}(z,1-k/2)$ is a weak Maass form if $k<0$.

\begin{cor}\label{wtlarger2}
If $k> 2$ and $m$ is an integer, then the Fourier expansion of the Poincar\'{e} series $F_{m,k,N}(z,k/2)$ is given by
\begin{equation*}
F_{m,k,N}(z,k/2)=
\frac{\delta_m}{\G(k)}q^m+\sum_{0<n\in \mathbb{Z}} \frac{c_{m,k}(n,k/2)}{\G(k)}n^{k-1}q^n\in\left\{
              \begin{array}{ll}
 S_k(N),&\ \mathrm{if}\ m>0,\\
 M^!_k(N),&\ \mathrm{if}\ m<0,\\
 M_k(N),&\ \mathrm{if}\ m=0.\end{array}
            \right.
\end{equation*}
where $\delta_m=\G(k)$ if $m=0$ and $\delta_m=1$ otherwise.  The coefficients $c_{m,k}(n,s)$ are defined in Theorem \ref{exp1}.
\end{cor}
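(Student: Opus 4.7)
The plan is to specialize the Fourier expansion of $F_{m,k,N}(z,s)$ in Theorem~\ref{exp1} at $s=k/2$ and simplify using the closed forms (\ref{wk2}) and (\ref{mk2}) for the Whittaker pieces. Since $k>2$ forces $\mathrm{Re}(k/2)>1$, the preceding lemma guarantees that $F_{m,k,N}(z,k/2)$ converges absolutely, is $\G_0(N)$-invariant, and by (\ref{hypp}) is annihilated by $\Delta_k$.

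The Whittaker identities do the bulk of the work. Formula (\ref{wk2}) gives $\mathcal{W}_{n,k}(y,k/2)=0$ for every $n\leq 0$, which wipes out the entire non-holomorphic half of the expansion, while for $n>0$ it evaluates $\mathcal{W}_{n,k}(y,k/2)e(nx)$ to $\G(k)^{-1}n^{k-1}q^n$. Formula (\ref{mk2}) reduces the principal term $\mathcal{M}_{m,k}(y,k/2)e(mx)$ to $\G(k)^{-1}q^m$ when $m\neq 0$ and to $1$ when $m=0$. Substituting both into Theorem~\ref{exp1} yields precisely the stated expansion, with $\delta_m$ packaging the two cases. Because every surviving Fourier term is a holomorphic power of $q$, $F_{m,k,N}(z,k/2)$ is holomorphic on $\H$ and hence a weight-$k$ holomorphic modular form on $\G_0(N)$.

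To identify the appropriate space, I read off the expansion at $\i$: a pole of order $|m|$ when $m<0$, boundedness when $m=0$, and vanishing when $m>0$. For $m<0$ this alone places the form in $M^!_k(N)$. For $m\geq 0$, I verify holomorphy (and, when $m>0$, vanishing) at every cusp $\mathfrak{a}\neq\i$ by conjugating with the scaling matrix $\sigma_\mathfrak{a}$ and repeating the coset argument behind Theorem~\ref{exp1}: the vanishing $\mathcal{W}_{n,k}(y,k/2)=0$ for $n\leq 0$ again forces the expansion at $\mathfrak{a}$ to be a holomorphic $q$-series, and the absence of any coset $\g$ with $\sigma_\mathfrak{a}^{-1}\g$ belonging to the stabilizer of $\i$ in $\G_0(N)$ shows that the principal term $\mathcal{M}_{m,k}(y,k/2)e(mx)$ is not transported to $\mathfrak{a}$ under $|_k\sigma_\mathfrak{a}$, killing the constant term at $\mathfrak{a}$ when $m>0$ and leaving only a bounded Eisenstein-type contribution when $m=0$. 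This cusp-by-cusp bookkeeping is the only nontrivial step and is the main obstacle I expect; the substitutions in the first two paragraphs are immediate.
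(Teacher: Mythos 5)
Your proposal is correct and follows the same route as the paper: the paper's entire proof is the observation that the expansion is immediate from Theorem \ref{exp1} together with (\ref{wk2}) and (\ref{mk2}), which is exactly your first two paragraphs. Your third paragraph supplies the cusp-by-cusp verification of membership in $S_k(N)$, $M^!_k(N)$ or $M_k(N)$ that the paper leaves implicit; this is a legitimate and standard elaboration rather than a different method.
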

\begin{proof} This is an immediate consequence of Theorem \ref{exp1}, (\ref{wk2}) and (\ref{mk2}). \end{proof}
The Fourier expansion above for negative $m$, $2<k\in\mathbb Z\backslash \frac12\mathbb Z$ and $N=4$ was computed in \cite[Theorem 3.3]{BJO} with slightly different normalization. For negative weight $k$, Bruinier established the Fourier expansion earlier for negative $m$ in \cite[Proposition 1.10]{Br}. The following is its generalization into the case for every integer $m$ and any level $N$.
\begin{cor}\label{wtneg}
If $k<0$ and $m$ is an integer, then the Poincar\'{e} series $F_{m,k,N}(z,1-k/2)\in H_k(N)$ if $m\leq 0$  and $F_{m,k,N}(z,1-k/2)\in H^!_k(N)$ if $m>0$. Its Fourier expansion is given by
\begin{eqnarray*}
&&F_{m,k,N}(z,1-k/2)=\mathfrak m_{m,k}(y)q^m+\frac{(4\pi)^{1-k}}{(1-k)\G(1-k)}c_{m,k}(0,1-k/2)\\
&&\qquad\qquad+\sum_{n>0} c_{m,k}(n,1-k/2)n^{k-1}q^n
+\sum_{n<0} c_{m,k}(n,1-k/2)|n|^{k-1}\frac{\G(1-k,-4\pi ny)}{\G(1-k)}q^n,
            \end{eqnarray*}
where $$\mathfrak m_{m,k}(y):=\left\{
                     \begin{array}{ll}
                       y^{1-k} & \hbox{if $m=0$,} \\
                       1-\frac{\G(1-k,-4\pi my)}{\G(1-k)}, & \hbox{if $m<0 $,} \\
                       (-1)^{k-1}\lt[1-\frac{\G(1-k,-4\pi my)}{\G(1-k)}\rt], & \hbox{if $m>0$}
                     \end{array}
                   \right.$$
                   and the coefficients $c_{m,k}(n,s)$ are defined in Theorem \ref{exp1}.
\end{cor}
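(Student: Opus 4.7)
The plan is to evaluate Theorem \ref{exp1} at the special value $s=1-k/2$, substitute the closed-form Whittaker evaluations (\ref{w1-k2}) and (\ref{m1-k2}) into each Fourier coefficient, and then read off the growth behavior at the cusps of $\G_0(N)$ to settle membership in $H_k^!(N)$ versus $H_k(N)$.

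First I would check convergence and harmonicity. Under the hypothesis $k<0$ we have $\mathrm{Re}(1-k/2)>1$, so the preceding lemma yields absolute convergence of $F_{m,k,N}(z,1-k/2)$ on compacta and $\G_0(N)$-invariance. Because the Laplace eigenvalue $(s-k/2)(1-k/2-s)$ vanishes at $s=1-k/2$, identity (\ref{hypp}) forces $\Delta_k F_{m,k,N}(z,1-k/2)=0$. Hence the Poincar\'{e} series is already a $\G_0(N)$-invariant real-analytic function annihilated by the weight-$k$ Laplacian, which is most of what is needed for harmonicity.

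Next I would substitute into Theorem \ref{exp1}. The principal term $\mathcal{M}_{m,k}(y,1-k/2)e(mx)$ becomes, by a case analysis of (\ref{m1-k2}) in $m>0$, $m<0$, $m=0$, exactly the claimed $\mathfrak{m}_{m,k}(y)q^m$. For the Whittaker side, (\ref{w1-k2}) gives $\mathcal{W}_{n,k}(y,1-k/2)e(nx)=n^{k-1}q^n$ when $n>0$, and $|n|^{k-1}\G(1-k)^{-1}\G(1-k,-4\pi n y)q^n$ when $n<0$; for $n=0$, direct substitution of $s=1-k/2$ into the $n=0$ clause of (\ref{defmnwn}) yields the constant $(4\pi)^{1-k}/((1-k)\G(1-k))$. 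Assembling these pieces reproduces the stated Fourier expansion at the cusp $\infty$.

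Finally, to justify the membership assertions, I would bound the growth at every cusp. At $\infty$ the displayed expansion already shows at most exponential growth (dominated by $q^m$ when $m<0$, by $y^{1-k}$ when $m=0$, and bounded when $m>0$), and expanding $F_{m,k,N}(z,1-k/2)$ at any other cusp by slashing with its scaling matrix produces a series of the same shape, giving $F_{m,k,N}(z,1-k/2)\in H^!_k(N)$ in every case. To sharpen this to $H_k(N)$ when $m\leq 0$, I would compute $\xi_k F_{m,k,N}(z,1-k/2)$ term by term: the operator kills the holomorphic $q$-coefficients, converts each incomplete-gamma term into a holomorphic $q$-power of positive index, and the remaining $\mathfrak{m}_{m,k}(y)q^m$ contribution pairs with the $y^{1-k}$ piece to yield, together with the other converted terms, a weight $2-k$ holomorphic modular form; the main obstacle I anticipate is precisely verifying the vanishing of this form at every cusp of $\G_0(N)$, since Theorem \ref{exp1} gives the expansion only at $\infty$, so this requires running the analogous Kloosterman-sum computation after conjugating by each scaling matrix. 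Everything else reduces to direct applications of the Whittaker identities recorded in Section 2.
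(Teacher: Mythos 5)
Your proposal is correct and follows essentially the same route as the paper, whose entire proof is the one-line citation ``This result follows from Theorem \ref{exp1}, (\ref{w1-k2}) and (\ref{m1-k2})''; you carry out exactly that substitution at $s=1-k/2$ and correctly match each case (including the identity $\G(2-k)=(1-k)\G(1-k)$ for the constant term). Your additional care about verifying the $H_k(N)$ versus $H^!_k(N)$ membership at all cusps is a legitimate point the paper leaves implicit, but it does not change the method.
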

\begin{proof} This result follows from Theorem \ref{exp1}, (\ref{w1-k2}) and (\ref{m1-k2}). \end{proof}

Many important properties of the weak Maass Poincar\'{e} series when $k<0$ and $k>2$ are discussed in \cite{BKR}. For weights $0\leq k\leq 2$, the Poincar\'{e} series does not converge. The theory of the resolvent kernel, however, assures that the Poincar\'{e} series $F_{m,k,N}(z,s)$ has an analytic continuation in $s$ to  $\mathrm{Re}(s)>1/2$ except for possibly finitely many simple poles in $(1/2,1)$.  These poles may only occur at points of the discrete spectrum of $\Delta_k$. (Refer to \cite[p.~386]{BJO}, \cite[p.~12]{DIT}, \cite[Section 3]{F}, \cite{H}.) Hence Theorem \ref{exp1} gives Fourier expansions for the Maass Poincar\'{e} series of these weights as well. The weak Maass Poincar\'{e} series $F_{0,2,N}(z,s)$ and $F_{0,0,N}(z,s)$ are both usual Eisenstein series while $F_{m,0,1}(z,s)$ is the Niebur-Poincar\'{e} series which is a weak Maass form \cite{DIT, F, Neun, Niebur}. Furthermore, the harmonic weak Maass Poincar\'{e} series
\begin{equation}F_{m,0,1}(z,1)=\overline{j_m(z)} + 24 \sigma (m),\end{equation}
where $j_m(z)=q^{-m}+O(q)$ ($m\geq 0$) form a unique basis for $\C[j]$. The Poincar\'{e} series $F_{m,2,1}(z,1)$ is a weakly holomorphic modular form (\cite[Theorem 3.3]{BJO}) if $m<0$ but it becomes trivial if $m>0$. In \cite{DIT2}, Duke, Imamo\={g}lu, and T\'{o}th instead works on the derivative of $F_{m,2,1}(z,s)$ at $s=1$ which is a non-trivial harmonic weak Maass form. We also use a derivative of an appropriate Poincar\'{e} series of weight $3/2$. As half-integral weight forms have a better arithmetic meaning in the plus space, we first construct weak Maass Poincar\'{e} series satisfying the plus space condition for any weight $k\in \mathbb Z\backslash \frac12\mathbb Z$ and then discuss weak Maass forms of weights $1/2$ and $3/2$ in the plus space in the following sections.

Before we end this section, we prove Lemma \ref{gendit} to complete the proof of Theorem \ref{exp1}. This approach is well described in Kohnen's earlier work \cite{Kohnen} and we only generalize it to arbitrary weight $k$ and arbitrary level $N$. Although the same method is used in many papers, none of them provide a full computation.  We first state two results of Hejhal \cite[Lemma 5.5, p.357]{H} to prove Lemma \ref{gendit}.

\begin{prop}[Hejhal]\label{Hejhal1}
\begin{eqnarray*}
\int_{-\infty}^{\infty}(1-iu)^{-a}(1+iu)^{-b}e^{ixu}du
=\left\{\begin{array}{ll}
\dfrac{\pi 2^{-\frac{a+b}{2}+1}x^{\frac{a+b}{2}-1}}{\Gamma(b)}W_{\frac{-a+b}{2},\frac{a+b-1}{2}}(2x),&\hbox{$x>0$,}\\
\pi 2^{2-a-b}\dfrac{\Gamma(a+b-1)}{\Gamma(a)\Gamma(b)},&\hbox{$x=0$.}
\end{array}\right.\\
\end{eqnarray*}
\end{prop}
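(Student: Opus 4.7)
The plan is to reduce the oscillatory integral to the Whittaker integral representation \eqref{iw} by inserting the Gamma-integral representations for the two rational factors and performing Fourier inversion.

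Assume first $\mathrm{Re}(a),\mathrm{Re}(b)>0$ and $\mathrm{Re}(a+b)>1$, so the integral on the left converges absolutely. Using $z^{-a}=\Gamma(a)^{-1}\int_0^\infty t^{a-1}e^{-zt}\,dt$, valid for $\mathrm{Re}(z)>0$, I would write
\[
(1-iu)^{-a}=\frac{1}{\Gamma(a)}\int_0^\infty t^{a-1}e^{-(1-iu)t}\,dt,\qquad (1+iu)^{-b}=\frac{1}{\Gamma(b)}\int_0^\infty s^{b-1}e^{-(1+iu)s}\,ds,
\]
substitute into the left-hand side, and interchange the order of integration. The inner $u$-integral is $\int_{-\infty}^{\infty}e^{iu(t-s+x)}\,du=2\pi\delta(t-s+x)$, which enforces $s=t+x$. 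For $x>0$ this is compatible with $s>0$, and the double integral collapses to
\[
I(x):=\int_{-\infty}^{\infty}(1-iu)^{-a}(1+iu)^{-b}e^{ixu}\,du=\frac{2\pi e^{-x}}{\Gamma(a)\Gamma(b)}\int_0^\infty t^{a-1}(t+x)^{b-1}e^{-2t}\,dt.
\]

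Next I would change variables $t=x(v-1)$, which extracts a factor $x^{a+b-1}e^{2x}$ and leaves $\int_1^\infty v^{b-1}(v-1)^{a-1}e^{-2xv}\,dv$. Matching this against \eqref{iw} at $y=2x$ via $\nu+\mu-\tfrac12=b-1$ and $\nu-\mu-\tfrac12=a-1$ yields $\nu=(a+b-1)/2$, $\mu=(-a+b)/2$, and $\Gamma(\nu-\mu+\tfrac12)=\Gamma(a)$, so the integral equals $\Gamma(a)(2x)^{-(a+b)/2}e^{-x}W_{(-a+b)/2,(a+b-1)/2}(2x)$. Collecting the prefactors (the exponentials $e^{-x}\cdot e^{2x}\cdot e^{-x}$ cancel exactly) reproduces the claimed Whittaker expression. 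For $x=0$ the same reduction yields directly $\int_0^\infty t^{a+b-2}e^{-2t}\,dt=2^{1-a-b}\Gamma(a+b-1)$, which gives the stated constant after combining with $2\pi/(\Gamma(a)\Gamma(b))$.

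The main obstacle is rigorously justifying the interchange of integrals, since the $u$-integral against $e^{ixu}$ is only conditionally convergent once the two Laplace integrals are inserted. I would handle this by introducing a Gaussian regulator $e^{-\varepsilon u^{2}}$ so that Fubini applies, in which case the inner $u$-integration produces an honest Gaussian kernel in $(t-s+x)$ converging to $2\pi\delta(t-s+x)$ as $\varepsilon\to 0^+$; continuity of both sides in $\varepsilon$ at $\varepsilon=0$ under the hypothesis $\mathrm{Re}(a+b)>1$ then legitimizes the formal calculation. The identity extends to the full parameter domain by analytic continuation in $(a,b)$, both sides being holomorphic wherever the original integral and the Whittaker function on the right are well defined.
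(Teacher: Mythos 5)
The paper does not prove this proposition at all: it is imported verbatim as a quoted result of Hejhal (\cite[Lemma~5.5, p.~357]{H}), so there is no in-paper argument to compare against. Your derivation is a correct, self-contained proof of the quoted formula. I checked the bookkeeping: inserting the two Gamma integrals and collapsing onto $s=t+x$ gives $\frac{2\pi e^{-x}}{\Gamma(a)\Gamma(b)}\int_0^\infty t^{a-1}(t+x)^{b-1}e^{-2t}\,dt$; the substitution $t=x(v-1)$ produces $x^{a+b-1}e^{2x}\int_1^\infty v^{b-1}(v-1)^{a-1}e^{-2xv}\,dv$; matching exponents against (\ref{iw}) forces $\nu=\frac{a+b-1}{2}$, $\mu=\frac{-a+b}{2}$, $\Gamma(\nu-\mu+\frac12)=\Gamma(a)$, and since $y^{-\nu-1/2}=(2x)^{-(a+b)/2}$ the powers of $x$ and $2$ recombine to $\pi\,2^{1-\frac{a+b}{2}}x^{\frac{a+b}{2}-1}/\Gamma(b)$, with the exponentials cancelling exactly as you say; the $x=0$ case reduces to $2^{1-a-b}\Gamma(a+b-1)$ and matches. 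Your treatment of the interchange is the right idea, with one refinement worth making explicit: after Gaussian regularization the inner $u$-integral is $\sqrt{\pi/\varepsilon}\,e^{-(t-s+x)^2/4\varepsilon}$, and passing this approximate identity through the $(t,s)$-integral is cleanest if you first assume $\mathrm{Re}(a),\mathrm{Re}(b)>1$ so that $t^{a-1}s^{b-1}e^{-t-s}$ is continuous up to the boundary of the quadrant; the weaker hypotheses are then recovered by the analytic continuation you already invoke, since the left-hand side is holomorphic for $\mathrm{Re}(a+b)>1$ and the right-hand side is entire in $(a,b)$ apart from the explicit Gamma factors. With that caveat the argument is complete, and it has the added value of actually proving a lemma the paper only cites.
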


\begin{prop}[Hejhal]\label{Hejhal2}
$$\begin{array}{l}
\displaystyle\int_{-\infty}^{\infty}e^{2\pi ic\frac{u}{1+u^2}}e^{2\pi i\alpha u}
\left(\frac{1-iu}{1+iu}\right)^{l}
\frac{M_{l,s-\frac12}\left(\frac{4\pi c}{1+u^2}\right)}{\Gamma(2s)} du \\
\hspace{4cm}=\left\{\begin{array}{ll}
\dfrac{2\pi}{\Gamma(s+l)}W_{l,s-\frac{1}{2}}(4\pi\alpha)\frac{\sqrt{c}}{\sqrt{\alpha}}J_{2s-1}(4\pi\sqrt{\alpha c}),&\hbox{$\alpha>0$,}\\
\dfrac{2\pi^{s+1}}{(s-\frac12)\Gamma(s-l)\Gamma(s+l)}c^s,&\hbox{$\alpha=0$,}\\
\dfrac{2\pi}{\Gamma(s-l)}W_{-l,s-\frac{1}{2}}(4\pi\alpha)\frac{\sqrt{c}}{\sqrt{|\alpha|}}I_{2s-1}(4\pi\sqrt{|\alpha|c}),&\hbox{$\alpha<0$.}
\end{array}\right.
\end{array}$$
\end{prop}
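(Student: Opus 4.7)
The plan is to reduce the left-hand side to an expression to which Proposition~\ref{Hejhal1} applies. First, substitute the Euler integral representation (\ref{im}) for $M_{l,s-1/2}$ (valid for $\mathrm{Re}(s\pm l)>0$):
$$
\frac{M_{l,s-1/2}\lt(\tfrac{4\pi c}{1+u^2}\rt)}{\G(2s)} = \frac{(4\pi c)^s(1+u^2)^{-s}e^{2\pi c/(1+u^2)}}{\G(s+l)\G(s-l)}\int_0^1 t^{s+l-1}(1-t)^{s-l-1}e^{-4\pi ct/(1+u^2)}\,dt.
$$
The key simplification comes from the partial-fraction identity $\tfrac{1-2t+iu}{1+u^2}=\tfrac{1-t}{1-iu}-\tfrac{t}{1+iu}$, which collapses the product of exponentials $e^{2\pi icu/(1+u^2)}\cdot e^{2\pi c/(1+u^2)}\cdot e^{-4\pi ct/(1+u^2)}$ to $\exp\lt(\tfrac{2\pi c(1-t)}{1-iu}-\tfrac{2\pi ct}{1+iu}\rt)$. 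Combined with $\lt(\tfrac{1-iu}{1+iu}\rt)^l (1+u^2)^{-s}=(1-iu)^{l-s}(1+iu)^{-l-s}$, the $u$-dependence of the integrand separates into a pure $1-iu$ factor times a pure $1+iu$ factor.

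Next, interchange the $t$- and $u$-integrations (justified in a half-plane of $s$ where absolute convergence holds, and extended elsewhere by analytic continuation) and Taylor-expand each of $\exp(A_{\pm}/(1\mp iu))$ in powers of $(1\mp iu)^{-1}$. The resulting inner $u$-integrals are all of the form handled by Proposition~\ref{Hejhal1}; for $\a>0$, each evaluates to a Whittaker $W$-function whose second index equals $s-\tfrac12+\tfrac{m+n}{2}$ (where $m,n$ are the two Taylor exponents) and whose first index is $l+\tfrac{n-m}{2}$. Collecting the double series diagonally by the total degree $N=m+n$, evaluating the $t$-integral as a Beta function, and simplifying the inner finite sum over $m+n=N$ via standard Pochhammer identities, the shifted Whittaker factors collapse to $W_{l,s-1/2}(4\pi\a)$, while the external sum over $N$ assembles into the Taylor series of $(\sqrt{c}/\sqrt{\a})J_{2s-1}(4\pi\sqrt{\a c})$, reproducing the right-hand side up to the explicit constants.

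The main obstacle is this resummation step: verifying that the finite inner sums over $m+n=N$ reduce to a single Whittaker factor requires contiguous relations and Kummer-type transformations for $W_{\mu,\nu}$ to eliminate the shifts in the first index. The case $\a<0$ follows by the same procedure (or by the substitution $u\mapsto-u$, which interchanges $1\pm iu$ and sends $l\mapsto -l$), with $J_{2s-1}$ replaced by $I_{2s-1}$ via $I_\nu(x)=i^{-\nu}J_\nu(ix)$. For $\a=0$ one uses the $x=0$ case of Proposition~\ref{Hejhal1} directly; the remaining $t$-integral is a single Beta function, and routine simplification produces the stated closed form.
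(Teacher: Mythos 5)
The paper does not actually prove this proposition; it is imported verbatim from Hejhal \cite[Lemma 5.5, p.~357]{H}, so there is no internal proof to compare against. Judged on its own, your reduction of Proposition \ref{Hejhal2} to Proposition \ref{Hejhal1} is the right strategy and the setup is correct: inserting (\ref{im}) with $\mu=l$, $\nu=s-\tfrac12$, using the partial-fraction identity $\tfrac{1-2t+iu}{1+u^2}=\tfrac{1-t}{1-iu}-\tfrac{t}{1+iu}$, and expanding the two exponentials produces integrals of exactly the shape of Proposition \ref{Hejhal1} with $a=s-l+m$, $b=s+l+n$; the $t$-integral is $B(s+l+n,s-l+m)$, whose factor $\Gamma(s+l+n)$ cancels the $\Gamma(b)$ coming from Proposition \ref{Hejhal1}; and for $\alpha=0$ all Gamma factors cancel, the inner sum over $m+n=N$ is $\tfrac{1}{N!}(1-1)^N=\delta_{N,0}$, and the stated middle case drops out.

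The genuine gap is the resummation you yourself flag as the ``main obstacle'': you assert, but do not prove, that the finite sums of shifted Whittaker functions collapse. Concretely, for $\alpha>0$ what is needed after the cancellations above is
$$\sum_{m+n=N}\frac{(-1)^n\,\Gamma(s-l+m)}{m!\,n!}\,W_{l+\frac{n-m}{2},\,s-\frac12+\frac{N}{2}}(y)\;=\;\frac{(-1)^N\,\Gamma(s-l)}{N!}\,y^{N/2}\,W_{l,\,s-\frac12}(y),$$
and ``standard Pochhammer identities'' plus contiguous relations will not hand this to you for general $N$ (for $N=1$ it is a known three-term relation, but iterating is painful). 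The identity is true, and the clean way to close the gap is to use (\ref{iw}) rather than contiguous relations: the indices are arranged so that $W_{l+\frac{n-m}{2},\,s-\frac12+\frac{N}{2}}(y)=y^{s+N/2}e^{y/2}\,\Gamma(s-l+m)^{-1}\int_1^{\infty}t^{\,s+l+n-1}(t-1)^{s-l+m-1}e^{-yt}\,dt$, the Gamma factors cancel, and the sum over $m+n=N$ inside the integral equals $\tfrac{1}{N!}\sum_{m+n=N}\binom{N}{m}(t-1)^m(-t)^n=\tfrac{(-1)^N}{N!}$ by the binomial theorem, which is exactly the right-hand side. With this in hand the outer sum over $N$ visibly assembles into $\sqrt{c/\alpha}\,J_{2s-1}(4\pi\sqrt{\alpha c})$, and the $\alpha<0$ case follows as you say from $u\mapsto-u$ (there the roles of $a$ and $b$ swap, the binomial sum becomes $(t-(t-1))^N=1$ with no alternating sign, and one lands on $I_{2s-1}$ and $W_{-l,s-1/2}$). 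Two smaller points you should not leave implicit: (\ref{im}) requires $\mathrm{Re}(s\pm l)>0$, so for large $|l|$ you must continue in $l$ as well as in $s$ (both sides are entire in $l$); and the termwise integration needs a domination argument, since the shifted $W$-factors grow with $N$.
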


Using these, we then derive a generalization of \cite[Lemma, p.~253]{Kohnen} and \cite[Lemma 2]{DIT} for any $k$ and $N$.

\begin{lem}\label{gendit}
Let $\g=\sm a & b \\ c & d\esm \in SL_2(\mathbb{R})$ have $c>0$ and $\nu \in \mathbb{N}$ and suppose that $\mathrm{Re}(s)>1/2$. Then for $\vp_{m,k}(z,s)$ defined in (\ref{defp}) with any $m\in \mathbb{Z}$, we have
\begin{eqnarray*}\sum_{r\in \mathbb{Z}}(c(z+r)+d)^{-k}\vp_{m,k}\lt(\nu\frac{a(z+r)+b}{c(z+r)+d},s\rt)
=2\pi i^{-k}\sum_{n\in \mathbb{Z}}e\lt(\frac{a\nu m+nd}{c}\rt)\mathcal{W}_{n,k}(y,s)e(nx)\\
\times\left\{
    \begin{array}{ll}
      \displaystyle{{|\nu mn|^{\frac{1-k}{2}}}c^{-1}J_{2s-1}\lt(\frac{4\pi\sqrt{|\nu mn|}}{c}\rt)}, & \hbox{if $mn>0$,} \\
      \displaystyle{{|\nu mn|^{\frac{1-k}{2}}}c^{-1}I_{2s-1}\lt(\frac{4\pi\sqrt{|\nu mn|}}{c}\rt)},& \hbox{if $mn<0$,} \\
    \displaystyle{{2^{k-1}\pi^{s+k/2-1}|\nu (m+n)|^{s-k/2}}c^{-2s}}, & \hbox{if $mn=0$, $m+n\neq 0$,} \\
    \displaystyle{{2^{2k-1}\pi^{k-1}\G(2s)}(2c)^{-2s}\nu^{s-k/2}}, & \hbox{if $m=n=0$.}
    \end{array}
  \right.
\end{eqnarray*}
\end{lem}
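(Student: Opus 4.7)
The plan is to compute the Fourier expansion in $x$ of the left-hand side via Poisson summation and to reduce the resulting Fourier coefficient integral to the shape handled by Hejhal's Propositions~\ref{Hejhal1} and \ref{Hejhal2}. For fixed $y>0$ the sum over $r$ is periodic in $x$ of period $1$, so unfolding gives the Fourier coefficients
$$a_n(y)=\int_{-\infty}^{\infty}(c(t+iy)+d)^{-k}\varphi_{m,k}\bigl(\nu\gamma(t+iy),s\bigr)\,e(-nt)\,dt.$$
Translating $u=t+d/c$ and using $ad-bc=1$ gives $\nu\gamma(u-d/c+iy)=\nu a/c-\nu/(c^{2}(u+iy))$, so the phase $e((m\nu a+nd)/c)$ pulls out and, after the further rescaling $u=yv$, $a_n(y)$ becomes
$$c^{-k}y^{1-k}e\!\left(\tfrac{m\nu a+nd}{c}\right)\!\int_{-\infty}^{\infty}(v+i)^{-k}\mathcal{M}_{m,k}\!\left(\tfrac{\nu}{c^{2}y(v^{2}+1)},s\right)e\!\left(-\tfrac{m\nu v}{c^{2}y(v^{2}+1)}\right)e(-nyv)\,dv.$$

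To match Hejhal's Proposition~\ref{Hejhal2}, expand $\mathcal{M}_{m,k}$ via (\ref{defmnwn}) and factor $(v+i)^{-k}=i^{-k}\bigl((1-iv)/(1+iv)\bigr)^{-k/2}(1+v^{2})^{-k/2}$; the $(v^{2}+1)^{k/2}$ released by the Whittaker normalization then cancels the $(1+v^{2})^{-k/2}$. For $m>0$ apply the involution $v\mapsto-u$, which flips the base of the $((\cdot)/(\cdot))^{-k/2}$ factor and corrects the sign in the exponential, sending Hejhal's parameter $l$ to $+k/2$; for $m<0$ no substitution is needed and $l=-k/2$. In either case the integral fits Hejhal's form with $c_{\mathrm{Hej}}=|m|\nu/(c^{2}y)>0$, $l=\tfrac{k}{2}\mathrm{sgn}(m)$, and $\alpha=\mathrm{sgn}(m)\cdot ny$. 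The three branches $\alpha>0,\alpha<0,\alpha=0$ of the proposition produce respectively the $J_{2s-1}$ (case $mn>0$), $I_{2s-1}$ (case $mn<0$), and algebraic (case $m\neq0,n=0$) entries of the claim, with Hejhal's Whittaker index $l$ or $-l$ correctly aligning with $\tfrac{k}{2}\mathrm{sgn}(n)$. The remaining case $m=0$ is handled in parallel by using $\varphi_{0,k}(\nu\gamma w,s)=(\mathrm{Im}\,\nu\gamma w)^{s-k/2}$ and splitting $(v+i)^{-k}(v^{2}+1)^{-s+k/2}=(v+i)^{-s-k/2}(v-i)^{-s+k/2}$, which brings the integral into the form of Proposition~\ref{Hejhal1} and yields the last two branches ($m=0,n\neq0$ and $m=n=0$).

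Reassembly of the output is then automatic: the Whittaker value $W_{\frac{k}{2}\mathrm{sgn}(n),s-1/2}(4\pi|n|y)$ produced by Hejhal combines with the surviving powers of $y$, $4\pi$, and the gamma factor $\Gamma(s+\tfrac{k}{2}\mathrm{sgn}(n))^{-1}$ to form exactly $\mathcal{W}_{n,k}(y,s)$ as in (\ref{defmnwn}), while $\sqrt{c_{\mathrm{Hej}}|\alpha|}=\sqrt{|mn\nu|}/c$ and the residual algebra collapses to $|mn\nu|^{(1-k)/2}/c$ times the Bessel value, matching the stated coefficients term by term.

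The main obstacle is the precise bookkeeping: tracking powers of $y$, $c$, $\nu$, and $4\pi$ through the two substitutions; choosing branches consistently in $(v\pm i)^{-k}$ for half-integral $k$; and verifying that $\mathrm{sgn}(m)$ migrates correctly from the Whittaker index of $\mathcal{M}_{m,k}$ on the left to that of $\mathcal{W}_{n,k}$ on the right via the $v\mapsto-v$ move and Hejhal's $\pm\alpha$ dichotomy. Once these delicate but routine manipulations are carried out, all four cases of the lemma emerge from a single unified Hejhal evaluation.
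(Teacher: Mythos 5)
Your proposal follows essentially the same route as the paper's proof: unfold the $r$-sum into a Fourier coefficient integral, substitute so that the integral matches Hejhal's Proposition \ref{Hejhal2} (for $m\neq 0$, with parameters $l=\tfrac{k}{2}\mathrm{sgn}(m)$, $\alpha=\mathrm{sgn}(m)ny$) and Proposition \ref{Hejhal1} (for $m=0$), then reassemble the output into $\mathcal{W}_{n,k}(y,s)$. The only cosmetic difference is that you carry a general $\g$ through the translation $u=t+d/c$ explicitly to extract the phase $e\lt(\frac{a\nu m+nd}{c}\rt)$, whereas the paper first reduces to the case $\sm 0&-1\\1&0\esm$ following Kohnen; the substitutions and case analysis are otherwise identical.
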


\begin{proof} As explained in \cite{Kohnen}, it suffices to treat the case $M=\left(\begin{array}{rr}0&-1\\1&0\end{array}\right)$. Since the series $\sum_{r\in\Z}(z+r)^{-k}\varphi_{m,k}\left(-\frac{\nu}{z+r},s\right)$ converges absolutely uniformly on compact subsets of $\mathbb H$ for $\mathrm{Re}(s)>1/2$, we may write
$$\sum_{r\in\Z}(z+r)^{-k}\varphi_{m,k}\left(-\frac{\nu}{z+r},s\right)
=\sum_{n\in\Z}\b(n,y,s)e(nx).$$
For nonzero $m\in\Z$,
\begin{eqnarray*}
\b(n,y,s)&=&\int_0^1e(-nx)\sum_{r\in\Z}(z+r)^{-k}\varphi_{m,k}\left(-\frac{\nu}{z+r},s\right)dx\\
&=&\int_{-\infty}^{\infty}e(-nx)z^{-k}\varphi_{m,k}\left(-\frac{\nu}{z},s\right)dx\\
&=&\int_{-\infty}^{\infty}z^{-k}\left(\frac{4\pi\nu |m|y}{x^2+y^2}\right)^{-\frac{k}{2}}\frac{M_{{\rm sgn}(m)\frac{k}{2},s-\frac{1}{2}}\left(\frac{4\pi\nu |m|y}{x^2+y^2}\right)}{\Gamma(2s)}e\left(-\frac{\nu mx}{x^2+y^2}-nx\right)dx\\
&=&(4\pi\nu |m|y)^{-\frac{k}{2}}i^{-k}I,\end{eqnarray*} where we set
$$I=
\int_{-\infty}^{\infty}\left(\frac{y-ix}{y+ix}\right)^{-\frac{k}{2}}\frac{M_{{\rm sgn}(m)\frac{k}{2},s-\frac{1}{2}}\left(\frac{4\pi\nu |m|y}{x^2+y^2}\right)}{\Gamma(2s)}e\left(-\frac{\nu mx}{x^2+y^2}-nx\right)dx.$$
Substituting $x=-\mathrm{sgn}(m)yu$, we have
\begin{eqnarray*}
I=y\int_{-\infty}^{\infty}\left(\frac{1-iu}{1+iu}\right)^{\mathrm{sgn}(m)\frac{k}{2}}\frac{M_{\mathrm{sgn}(m)\frac{k}{2},s-\frac{1}{2}}\left(\frac{4\pi\nu |m|}{y(u^2+1)}\right)}{\Gamma(2s)}e\left(\frac{\nu |m|u}{y(u^2+1)}+{\rm sgn}(m)nyu\right)du.
\end{eqnarray*}
Then setting $A=\mathrm{sgn}(m)ny$ and $B=\frac{\nu |m|}{y}$, we get
\begin{eqnarray*}
I=y\int_{-\infty}^{\infty}\left(\frac{1-iu}{1+iu}\right)^{\mathrm{sgn}(m)\frac{k}{2}}\frac{M_{\mathrm{sgn}(m)\frac{k}{2},s-\frac{1}{2}}\left(\frac{4\pi B}{u^2+1}\right)}{\Gamma(2s)}e\left(\frac{Bu}{u^2+1}+Au\right)du
\end{eqnarray*}
so that we may apply Proposition \ref{Hejhal2} to obtain that
\begin{eqnarray*}
I&=&y\left\{\begin{array}{ll}
\frac{2\pi}{\Gamma\left(s+{\rm sgn}(m)\frac{k}{2}\right)}W_{{\rm sgn}(m)\frac{k}{2},s-\frac{1}{2}}(4\pi A)\sqrt{\frac{B}{A}}J_{2s-1}(4\pi\sqrt{AB}),&\hbox{$A>0$,}\\
\frac{4\pi^{1+s}}{(2s-1)\Gamma\left(s+\frac{k}{2}\right)\Gamma\left(s-\frac{k}{2}\right)}B^s,&\hbox{$A=0$,}\\
\frac{2\pi}{\Gamma\left(s-{\rm sgn}(m)\frac{k}{2}\right)}W_{-{\rm sgn}(m)\frac{k}{2},s-\frac{1}{2}}(4\pi |A|)\sqrt{\frac{B}{|A|}}I_{2s-1}(4\pi\sqrt{|A|B})),&\hbox{$A<0$.}
\end{array}\right.\end{eqnarray*}
Hence we have
\begin{eqnarray*}\b(n,y,s)&=&2\pi i^{-k}\mathcal W_{n,k}(y,s)\left\{\begin{array}{ll}
{\left|\nu {n}{m}\right|^{\frac{1-k}{2}}}J_{2s-1}(4\pi\sqrt{\nu |mn|}),&\hbox{$mn>0$,}\\
2^{k-1}\pi^{s+\frac{k}{2}-1}|\nu m|^{s-\frac{k}{2}},&\hbox{$mn=0$,}\\
{\left|\nu {n}{m}\right|^{\frac{1-k}{2}}}I_{2s-1}(4\pi\sqrt{\nu |mn|}),&\hbox{$mn<0$.}
\end{array}\right.
\end{eqnarray*}
If $m=0$, then
\begin{eqnarray*}
\b(n,y,s)&=&\int_0^1e(-nx)\sum_{r\in\Z}(z+r)^{-k}\varphi_{0,k}\left(-\frac{\nu}{z+r},s\right)dx\\
&=&\int_{-\infty}^{\infty}e(-nx)z^{-k}\varphi_{0,k}\left(-\frac{\nu}{z},s\right)dx\\
&=&\int_{-\infty}^{\infty}e(-nx)z^{-k}\left(\frac{\nu y}{x^2+y^2}\right)^{s-\frac{k}{2}}dx\\
&=&(\nu y)^{s-\frac{k}{2}}\int_{-\infty}^{\infty}z^{-(s+\frac{k}{2})}\bar{z}^{-(s-\frac{k}{2})}e(-n x)dx.
\end{eqnarray*}
By letting $x=yu$ and $x=-yu$ when $n\leq 0$ and $n>0$, respectively, we have

\begin{eqnarray*}
\b(n,y,s)=\nu^{s-\frac{k}{2}}y^{-s-\frac{k}{2}+1}i^{-k}\int_{-\infty}^{\infty}(1-iu)^{-(s\pm \frac{k}{2})}
(1+iu)^{-(s\mp \frac{k}{2})}e^{2\pi i|n|yu}du.\end{eqnarray*}
If $n\neq 0$, then by Proposition \ref{Hejhal1},
\begin{eqnarray*}
\b(n,y,s)&=&\nu^{s-\frac{k}{2}}y^{-s-\frac{k}{2}+1}i^{-k}\pi 2^{-s+1}(2\pi |n|y)^{s-1}\frac{W_{\mathrm{sgn}(n)\frac k2, s-\frac 12}(4\pi |n|y)}{\Gamma(s+\mathrm{sgn}(n)\frac{k}{2})}\\
&=&2\pi i^{-k}\mathcal W_{n,k}(y,s){2^{k-1}\pi^{s+\frac{k}{2}-1}|\nu n|^{s-\frac{k}{2}}} .\end{eqnarray*}
If $n=0$, then by Proposition \ref{Hejhal1} again,
\begin{eqnarray*}
\b(n,y,s)&=&\nu^{s-\frac{k}{2}}y^{-s-\frac{k}{2}+1}i^{-k}\pi 2^{2-2s}\frac{\Gamma(2s-1)}{\Gamma\left(s+\frac{k}{2}\right)\Gamma\left(s-\frac{k}{2}\right)}\\
&=&2\pi i^{-k}\mathcal W_{0,k}(y,s) 2^{1-2s}(2s-1)\G(2s-1)(4\pi)^{k-1}\nu^{s-k/2}\\
&=&2\pi i^{-k}\mathcal W_{0,k}(y,s) 2^{-1+2k-2s}\pi^{k-1}\G(2s)\nu^{s-k/2}.
\end{eqnarray*}
\end{proof}

\section{Weak Maass-Poincar\'{e} series in the plus space}

Throughout this section, we assume $k=\lambda+\frac 12$ where $\lambda\in \mathbb{Z}$ and $N=4N'$. Following \cite[p.~250]{Kohnen}, we employ Kohnen's projection operator $pr_k^{+}$ to construct a weight $k$ Maass-Poincar\'{e} series that satisfies the plus space condition, whose Fourier coefficients are supported on $(-1)^\lambda n\equiv 0,1\pmod{4}$. The weight $k$ projection operator $pr_k^{+}$ is defined by
$$pr_k^+(f)=\frac 12 f+(-1)^{k}\frac{1}{2\sqrt 2}\sum_{\nu (\mathrm{mod} \ 4)}f|_k B\widetilde{A}_\nu,$$
where $$B=\lt(\mat 4&1\\ 0&4 \emat, e^{(2\lambda+1)\pi i/4}\rt)\quad \mathrm{and}\quad A_\nu= \mat1&0\\4\nu&1\emat.$$
For each $m$ satisfying $(-1)^\lambda m\equiv 0,1 \pmod 4$ and $Re(s)>1$,
we define the Poincar\'{e} series $F_{m,k,N}^+(z,s)$ by
\begin{equation}F_{m,k,N}^+(z,s)=pr_k^+(F_{m,k,N}(z,s)).\end{equation}
The function $F_{m,k,N}^+(z,s)$ has weight $k$ for $\G_0(N)$ and satisfies
\begin{equation}\label{hyppp}
\Delta_k F_{m,k,N}^+(z,s)=\lt(s-\frac{k}{2}\rt)\lt(1-\frac k2-s\rt)F_{m,k,N}^+(z,s)\end{equation}
as $F_{m,k,N}(z,s)$ does. The Fourier expansion of the weakly harmonic Maass form $F_{m,k,4}^+(z,k/2)$ is computed in \cite[Theorem 3.5]{BJO} when $m<0$ and $k\geq 3/2.$  In fact, $F_{m,k,4}^+(z,k/2)$ is a weakly holomorphic modular form if $k>3/2$ and $m<0$. The Fourier expansion of $F_{m,k,4}^+(z,1-k/2)$ for $k\leq 1/2$ and $m<0$ is calculated in \cite[Theorem 2.1]{BO1}. Now we establish the Fourier coefficients of $F_{m,k,N}^+(z,s)$ for any $m$, $k$, $N$ and $s$ by proving the following three lemmas first.

\begin{lem}\cite[Proposition 3]{Kohnen}\label{Kpr} For $\alpha=\left(\frac{-4}{N'}\right)$, we set
$$\eta^{\left(\frac{-\alpha}{N'}\right)}=\left(\mat 1&0\\-\alpha N'&1\emat,(-\alpha N'z+1)^k\right)\ \mathrm{and} \ \eta^{\left(\frac{1}{2N'}\right)}=\left(\mat 1&0\\ 2N'&1\emat,(2N'z+1)^k\right).$$
For a weight $k$ weakly harmonic Maass form $g$ on $\G_0(N)$, if we write
\begin{eqnarray*}
&&g(z)=\sum_n a(n,y)q^n\\
&&g|_k\eta^{\left(\frac{-4}{N'}\right)}(z)=\sum_n a^{\left(\frac{-\alpha}{N'}\right)}(n,y)q^{n/4}\\
&&g|_k\eta^{\left(\frac{1}{2N'}\right)}(z)=\sum_{(-1)^\lambda n\equiv 1(4)}a^{\left(\frac{1}{2N'}\right)}(n,y)q^{n/4},
\end{eqnarray*}
then
\begin{eqnarray*} pr_k^+(g)(z)&=&\sum_{n\equiv 0\,(4)}
\left(a(n,y)+(1-(-1)^\lambda i)2^{2\lambda-1}i^{n/4}a^{(-\alpha/N')}
\left(\frac{n}{4},16y\right)\right)q^n\\
& &+\sum_{(-1)^\lambda n\equiv 1\,(4)}
\left(a(n,y)+2^{\lambda-1}\left(\frac{(-1)^\lambda n}{2}\right)a^{(1/N')}
(n,4y)\right)q^n.
\end{eqnarray*}
\end{lem}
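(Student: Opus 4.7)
The plan is to expand the definition of $pr_k^+$ term by term:
$$pr_k^+(g)(z) = \tfrac12\, g(z) + (-1)^k (2\sqrt 2)^{-1}\sum_{\nu\,(\mathrm{mod}\,4)} (g|_k B\widetilde{A}_\nu)(z),$$
and reduce each slash $g|_k B\widetilde{A}_\nu$ to a slash by one of the test matrices $\eta^{(-\alpha/N')}$ or $\eta^{(1/2N')}$ via $\G_0(N)$-invariance of $g$. Explicitly $BA_\nu = \sm 4(1+\nu) & 1 \\ 16\nu & 4\esm$, which sends $\infty$ to $\tfrac{1+\nu}{4\nu}$ (or to $\infty$ itself when $\nu=0$). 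For $\nu = 0$ the matrix $B$ acts on $\H$ as translation by $1/4$. For $\nu = 1,2,3$ I would find, by direct matrix arithmetic, a Bruhat-type factorization
$$BA_\nu \;=\; \g_\nu \cdot \tau_\nu \cdot T^{r_\nu},$$
with $\g_\nu\in\G_0(N)$, $T=\sm 1&1\\0&1\esm$, and $\tau_\nu$ equal to the matrix part of $\eta^{(-\alpha/N')}$ or of $\eta^{(1/2N')}$ according as the cusp $\tfrac{1+\nu}{4\nu}$ is $\G_0(N)$-equivalent to $-\alpha/N'$ or to $1/(2N')$. Invariance of $g$ then converts $g|_k B\widetilde A_\nu$ into a translate of $g|_k\eta^{(-\alpha/N')}$ or $g|_k\eta^{(1/2N')}$, whose Fourier expansions are precisely the data assumed in the statement.

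Substituting these expansions and summing over $\nu\,(\mathrm{mod}\,4)$ reduces the problem to evaluating two exponential sums of the shape $\sum_{\nu} c(\nu)\, e(n\nu/4)$, whose weights $c(\nu)$ record the constant $e^{(2\lambda+1)\pi i/4}$ carried by $B$, the translation multiplier from $T^{r_\nu}$, and the half-integral weight multiplier $\ve_d^{-2k}\lt(\tfrac{c}{d}\rt)^{2k}$ picked up from $\widetilde{\g_\nu}$. By orthogonality, each such sum is supported on a single residue class of $n$ modulo $4$: $n\equiv 0$ in the $\eta^{(-\alpha/N')}$-case and $(-1)^\l n\equiv 1$ in the $\eta^{(1/2N')}$-case. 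Combining the two values of $\nu$ contributing to each class, together with the prefactor $(-1)^k(2\sqrt 2)^{-1}$, should collapse to $(1-(-1)^\l i)2^{2\l-1}i^{n/4}$ and to $2^{\l-1}\bigl(\tfrac{(-1)^\l n}{2}\bigr)$ respectively; the Kronecker symbol in the latter case arises as a quadratic Gauss sum over $\Z/4\Z$.

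The main obstacle is the careful bookkeeping of the metaplectic multiplier through each factorization $BA_\nu = \g_\nu \tau_\nu T^{r_\nu}$: the automorphy factors $j(\g_\nu,\cdot)^{-2k}$ must be combined with the constant attached to $B$, with those of $\tau_\nu$ and $T^{r_\nu}$, and with the cocycle defect of the section $\g\mapsto\widetilde\g$ on the metaplectic cover, and all of these must assemble into the clean coefficients of the statement. This is the technical heart of Kohnen's original argument for \cite[Proposition 3]{Kohnen}; once the four cosets and their metaplectic decorations are pinned down, the residual work is elementary matrix arithmetic and a pair of Gauss-sum evaluations over $\Z/4\Z$.
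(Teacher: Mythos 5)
The paper does not actually prove this lemma: its entire justification is the citation to \cite[Proposition 3]{Kohnen} together with the one-sentence remark that Kohnen's result, stated there for cusp forms, persists for weakly harmonic Maass forms. Your proposal to re-derive the identity from scratch by expanding $pr_k^+$ and decomposing each coset $BA_\nu$ is therefore a genuinely different (and more self-contained) route, and it is structurally the right one --- it is in essence Kohnen's own argument. If carried out, it would also supply the one thing the paper merely asserts: nothing in the computation uses holomorphy of $g$, since the coset decomposition and multiplier bookkeeping are identities of slash operators and apply verbatim with $y$-dependent coefficients, the arguments $16y$ and $4y$ being forced by how the imaginary part transforms under the matrices in your factorization.

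That said, two concrete repairs are needed before the outline is a proof. First, the factorization $BA_\nu=\g_\nu\tau_\nu T^{r_\nu}$ cannot hold as written: $\det(BA_\nu)=16$, while $\g_\nu$, $\tau_\nu$ and $T^{r_\nu}$ all have determinant $1$. The correct decomposition must carry an upper-triangular factor of determinant $4$ or $16$ (something of the shape $\sm 4&r_\nu\\0&4\esm$ or $\sm 1&r_\nu\\0&4\esm$ up to scalars); this is not cosmetic, since that factor is precisely what produces the variable $z/4$, hence the expansions in $q^{n/4}$ and the rescaled arguments $16y$ and $4y$ appearing in the statement, and it contributes its own automorphy constant to the exponential sums you evaluate at the end. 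Second, you explicitly defer ``the careful bookkeeping of the metaplectic multiplier,'' but that bookkeeping is the entire content of the lemma --- the coefficients $(1-(-1)^\l i)2^{2\l-1}i^{n/4}$ and $2^{\l-1}\bigl(\tfrac{(-1)^\l n}{2}\bigr)$ are nothing but the output of that computation. As it stands, the proposal is a correct roadmap to Kohnen's proof rather than a proof; if instead you wish to rely on Kohnen as the paper does, the only point genuinely requiring verification is the cusp-form-to-Maass-form extension noted above.
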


The original \cite[Proposition 3]{Kohnen} is for a cusp form $g$, but the result still holds for a weakly harmonic Maass form $g$.  As Fourier coefficients of $F:=F_{m,k,N}(z,s)$ are already computed in Theorem \ref{exp1}, we compute those for $F|_k\eta^{\left(\frac{-4}{N'}\right)}(z,s)$ and for  $F|_k\eta^{\left(\frac{1}{2N'}\right)}(z,s)$ in two lemmas below to find the Fourier expansion of $F_{m,k,N}^+(z,s)$.

\begin{lem}\label{exp2}
If $m$ is an integer, then $F|_k\eta^{\left(\frac{-4}{N'}\right)}(z,s)$ has the Fourier expansion
$$F_{m,k,N}|_k\eta^{\left(\frac{-4}{N'}\right)}(z,s)=\sum_{n\in \mathbb{Z}} c'_{m,k}(n,s)\mathcal{W}_{n,k}\left(\frac{y}{4},s\right) e\left(\frac{nx}{4}\right),$$
where the coefficients $c'_{m,k}(n,s)$ are given by
$$2\pi i^{-k}\frac{(-1)^\lambda i}{4}i^{-n}\sum_{c>0 \atop c\equiv 1 (2)} H\lt(\frac{m}{4},n\rt)\times \left\{
    \begin{array}{ll}
      \displaystyle{|4mn|^{\frac{1-k}{2}}J_{2s-1}\lt(\frac{4\pi\sqrt{|4mn|}}{Nc}\rt)}, & mn>0, \\
      \displaystyle{|4mn|^{\frac{1-k}{2}}I_{2s-1}\lt(\frac{4\pi\sqrt{|4mn|}}{Nc}\rt)}, & mn<0, \\
    \displaystyle{2^{k-1}\pi^{s+\frac{k}{2}-1}|4(m+n)|^{s-\frac{k}{2}}(Nc)^{1-2s}}, & mn=0, m+n\neq 0,
\\
    \displaystyle{2^{2k-2}\pi^{k-1}\Gamma(2s)(2Nc)^{1-2s}4^{s-k/2}}, & m=n=0, \end{array}
  \right.
$$
where $H(\frac{m}{4},n)=\lt(\frac{4}{-N'c}\rt)\lt(\frac{-4}{N'c}\rt)^{-k}\sum_{\delta(N'c)^*}\lt(\frac{\delta}{N'c}\rt)
e\lt(\frac{n\delta+m\delta^{-1}}{N'c}\rt)$.
\end{lem}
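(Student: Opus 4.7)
The proof follows the blueprint of Theorem \ref{exp1}, modified to account for the right action of $\eta^{(-\alpha/N')}$. First, I would unfold
$$F_{m,k,N}|_k\eta^{(-\alpha/N')}(z,s) = \sum_{\gamma \in \Gamma_\infty \backslash \Gamma_0(N)} \varphi_{m,k}|_k (\gamma \eta^{(-\alpha/N')})(z,s)$$
and compute, for $\gamma = \sm a & b \\ Nc & d \esm$, the matrix product $\gamma \cdot \sm 1 & 0 \\ -\alpha N' & 1\esm = \sm a-\alpha N'b & b \\ N'(4c-\alpha d) & d\esm$. Since $\gcd(d,N)=1$ and $4 \mid N$, the entry $d$ is odd and hence $c' := 4c - \alpha d$ is odd as well. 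I would reindex the double coset sum by $(c', d)$, where $c' > 0$ is odd and $d$ runs through residues coprime to $N'c'$ in a fundamental domain.

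Second, I would isolate the inner sum $\sum_{r\in\mathbb Z}$ over the $\Gamma_\infty$-translates and recognize it as precisely the kind of sum evaluated by Lemma \ref{gendit}, but with the scaling parameter $\nu=4$ and bottom-left entry $N'c'$. The value $\nu=4$ reflects the fact that $\eta^{(-\alpha/N')}$ interchanges the cusps of $\Gamma_0(N)$ at $\infty$ and at $0$, whose widths are $1$ and $4$ respectively; so after slashing, the natural period in $x$ becomes $4$. This accounts for the $e(nx/4)$ and $\mathcal W_{n,k}(y/4,s)$ in the final expression, and for the factor $|4mn|$ appearing inside the Bessel functions. The three degenerate cases ($mn=0$ with $m+n\neq 0$, $m=n=0$, and $mn<0$) will follow from the corresponding branches of Lemma \ref{gendit}.

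Third, I would assemble the Kloosterman-like sum $H(m/4,n)$. The coset sum over $\delta \pmod{N'c'}^*$ (the representatives congruent to $d$) combines with the half-integral weight cocycle contribution $j(\gamma,\eta z)\,j(\eta,z)/j(\gamma\eta,z)$. Via quadratic reciprocity, the factor $\left(\tfrac{Nc}{d}\right)\varepsilon_d^{-2k}$ coming from $j(\gamma,z)$ is converted, upon passage from $d$ modulo $Nc$ to $\delta$ modulo $N'c'$, into $\left(\tfrac{\delta}{N'c'}\right)\left(\tfrac{4}{-N'c'}\right)\left(\tfrac{-4}{N'c'}\right)^{-k}$, which is precisely the data defining $H(m/4,n)$. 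The remaining global roots of unity (from residual $\varepsilon$-factors and from the $i^{-k}$ built into Lemma \ref{gendit}) then collapse into the prefactor $\tfrac{(-1)^\lambda i}{4}\, i^{-n}$.

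The main obstacle will be the half-integral weight sign computation in the third step. Because the multiplier system for $k=\lambda+1/2$ is not strictly multiplicative, $j(\gamma\eta,z)$ and $j(\gamma,\eta z)\,j(\eta,z)$ differ by a sign depending on both $\gamma$ and $\eta$. Carefully tracking these signs through the reindexing $(c,d)\leftrightarrow(c',\delta)$, and then recognizing the emergent Jacobi/Kronecker symbols as the character $\left(\tfrac{\delta}{N'c'}\right)$ inside $H$, is where the bulk of the work lies; everything else is a mechanical application of Lemma \ref{gendit} and the definition of the slash operator.
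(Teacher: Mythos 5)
Your overall architecture --- unfold the Poincar\'e series against $\eta^{(-\alpha/N')}$, reduce the inner sum to Lemma \ref{gendit} with $\nu=4$, and then assemble the twisted Kloosterman sum $H(m/4,n)$ --- is the same as the paper's, which delegates the first step to Kohnen's Proposition 4 and the last to a short character-sum identity. However, both places where the actual work happens are mishandled in your sketch. The reindexing in your first step does not work as stated: for a fixed odd $c'>0$, the lower rows $(N'c',d)$ that occur among the matrices $\g\eta$ with $\g=\sm a & b\\ 4N'c_0 & d\esm\in\G_0(N)$ are not all $d$ coprime to $N'c'$, since $c'=4c_0-\alpha d$ forces $d$ into the single residue class $d\equiv-\alpha c'\equiv -N'c'\pmod 4$. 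Moreover, right multiplication by $\eta$ does not commute with the $\G_\i$-translates: the matrix $\g T^r\eta$ has lower-left entry $N'c'-4\alpha N'^2c_0r$, which varies with $r$, so the family you can feed into Lemma \ref{gendit} is not $\{\g T^r\eta\}_{r}$ but $\{(\g\eta)T^{4r}\}_{r}$ --- only translation by $T^4$ preserves the lower row. Consequently $d$ must be summed modulo $4N'c'=Nc'$ subject to the congruence mod $4$, the inner sum has period $4$, and Lemma \ref{gendit} is applied in the variable $z/4$ with effective modulus $Nc'$ rather than $N'c'$. As literally written, your inner sum over all integer translates with modulus $N'c'$ would produce a function of period $1$ in $x$ with Bessel arguments $4\pi\sqrt{|4mn|}/(N'c')$, contradicting the $e(nx/4)$ and the $4\pi\sqrt{|4mn|}/(Nc)$ in the very statement you are proving.

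Second, your explanation of the prefactor $\frac{(-1)^\lambda i}{4}i^{-n}$ is wrong, and this matters because it is the other nontrivial step. A factor depending on the Fourier index $n$ cannot come from ``residual $\ve$-factors'' or from the $i^{-k}$ of Lemma \ref{gendit}: the multiplier system knows nothing about $n$. In the paper the factor $i^{-n}$ arises when the restricted sum $\sum_{d\,(Nc)^*,\ d\equiv -N'c\,(4)}\lt(\frac{Nc}{d}\rt)e\lt(\frac{nd+4md^{-1}}{Nc}\rt)$ is folded down to a free sum over $\delta\pmod{N'c}$: splitting $d$ by the Chinese Remainder Theorem into its components modulo $4$ and modulo $N'c$, the mod-$4$ part of the additive character $e(nd/(Nc))$ evaluated on the single allowed class contributes exactly $e(-n/4)=i^{-n}$, and together with the mod-$4$ Gauss-sum normalization this yields $\frac{(-1)^\lambda i}{4}i^{-n}$ and the symbols $\lt(\frac{4}{-N'c}\rt)\lt(\frac{-4}{N'c}\rt)^{-k}$ defining $H$. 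Until you carry out this computation (the second display in the paper's proof), the identification of your coset sum with $H(m/4,n)$ is unproven.
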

\begin{proof} By the exactly same argument in the proof of \cite[Proposition 4]{Kohnen} again, we get
\begin{eqnarray*}
&&F|_k\eta^{\left(\frac{-4}{N'}\right)}(z,s)=\sum_{c>0,c\equiv1\,(2)}\lt(\frac{-4}{-N'c}\rt)^k\sum_{d(Nc)^*,\, d\equiv -N'c(4)}
\lt(\frac{Nc}{d}\rt)\\
    &&\qquad\qquad\times\sum_{r\in\Z}\lt(Nc\lt(\frac{z}{4}+r\rt)+d\rt)^{-k}
\varphi_{m,k}\lt(4\frac{a\lt(\frac{z}{4}+r\rt)+\frac{b}{4}}{Nc\lt(\frac{z}{4}+r\rt)+d},s\rt).
\end{eqnarray*}
The Fourier expansion of the inner sum is given by Lemma \ref{gendit} and
we see that
\begin{eqnarray*}
&&\frac{1}{Nc}\lt(\frac{-4}{-N'c}\rt)^k\sum_{d(Nc)^*,\, d\equiv -N'c(4)}
\lt(\frac{Nc}{d}\rt)e\lt(\frac{nd+4md^{-1}}{Nc}\rt)\\
&&\quad=\frac{(-1)^\lambda i}{4}i^{-n}\lt(\frac{4}{-N'c}\rt)\lt(\frac{-4}{N'c}\rt)^{-k}\sum_{d(N'c)^*}\lt(\frac{d}{N'c}\rt)
e\lt(\frac{nd+4^{-1}md^{-1}}{N'c}\rt)\\
&&\quad=\frac{(-1)^\lambda i}{4}i^{-n}H(m/4,n),
\end{eqnarray*}
from which Lemma \ref{exp2} follows.

\end{proof}

\begin{lem}\label{exp3}
If $m$ is an integer, then $F|_k\eta^{\left(\frac{1}{2N'}\right)}(z,s)$ has the Fourier expansion
$$F_{m,k,N}|_k\eta^{\left(\frac{1}{2N'}\right)}(z,s)=\sum_{n\in \mathbb{Z}} c''_{m,k}(n,s)\mathcal{W}_{n,k}\left(\frac{y}{4},s\right) e\left(\frac{nx}{4}\right),$$
where the coefficients $c''_{m,k}(n,s)$ are given by
$$2\pi i^{-k}\sum_{c>0 \atop c\equiv 1 (2)} \frac{K_k(4m,n,2Nc)}{2Nc}\times \left\{
    \begin{array}{ll}
      \displaystyle{|4mn|^{\frac{1-k}{2}}J_{2s-1}\lt(\frac{4\pi\sqrt{|4mn|}}{2Nc}\rt)}, & mn>0, \\
      \displaystyle{|4mn|^{\frac{1-k}{2}}I_{2s-1}\lt(\frac{4\pi\sqrt{|4mn|}}{2Nc}\rt)}, & mn<0, \\
    \displaystyle{2^{k-1}\pi^{s+\frac{k}{2}-1}|4(m+n)|^{s-\frac{k}{2}}(2Nc)^{1-2s}}, & mn=0, m+n\neq 0,
\\
    \displaystyle{2^{2k-2}\pi^{k-1}\Gamma(2s)(4Nc)^{1-2s}4^{s-k/2}}, & m=n=0.    \end{array}
  \right.
$$
\end{lem}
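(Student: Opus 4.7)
The plan is to mimic almost verbatim the proof of Lemma \ref{exp2}, but with the coset decomposition adapted to the matrix $\sm 1&0\\ 2N'&1\esm$ instead of $\sm 1&0\\ -\a N'&1\esm$. The key point is that the role previously played by $Nc$ will now be played by $2Nc$, and the factor of $4$ coming from $\eta$ will again force $\nu=4$ when we invoke Lemma \ref{gendit}.

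First I would carry out the coset decomposition as in \cite[Proposition 4]{Kohnen}. For $\g=\sm a&b\\Nc'&d\esm \in \G_0(N)$, a direct multiplication gives
\[
\g\cdot \mat 1&0\\ 2N'&1\emat=\mat a+2bN'&b\\ N'(4c'+2d)&d\emat,
\]
so the lower-left entry is $2N'(2c'+d)$. After absorbing translations from $\G_\infty$, one sees that a complete set of representatives for $\G_\infty\backslash\G_0(N)\cdot\eta^{(1/(2N'))}$ is indexed by odd positive integers $c$ together with $d$ modulo $2Nc$ in a suitable congruence class, with $j(\g,z)\cdot(2N'z+1)^k$ combining (via the automorphy cocycle for half-integral weight) to the standard $(2Nc\cdot z+d)^{-k}$ times a Jacobi-symbol and $\ve_d$ factor exactly as in the Kloosterman sum $K_k(\cdot,\cdot,2Nc)$ defined in (\ref{kloos}). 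This is the routine but bookkeeping-heavy step.

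Next I would apply Lemma \ref{gendit} to the inner sum $\sum_{r\in\Z}\bigl(2Nc(z/4+r)+d\bigr)^{-k}\vp_{m,k}\!\left(4\cdot\frac{a(z/4+r)+b/4}{2Nc(z/4+r)+d},s\right)$ with $\nu=4$ and with $c$ replaced by $2Nc$. This produces $e\!\left(\frac{4a m+nd}{2Nc}\right)\mathcal W_{n,k}(y/4,s)e(nx/4)$ multiplied by the same four-case expression in $J_{2s-1}$, $I_{2s-1}$, or the $mn=0$ tail, each with argument $\frac{4\pi\sqrt{|4mn|}}{2Nc}$, precisely matching the assertion of the lemma.

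Then I would sum the coset contributions. The sum over $d$ (mod $2Nc$) of the Jacobi-symbol/$\ve_d$ coefficient against $e\!\left(\frac{4m\bar d+nd}{2Nc}\right)$ is, by the very definition (\ref{kloos}), equal to $K_k(4m,n,2Nc)$; the single overall factor of $1/(2Nc)$ comes from the $1/c$ appearing in Lemma \ref{gendit} after rescaling. Combining everything gives the claimed formula.

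The main obstacle will be the half-integral-weight character bookkeeping in the first step: one has to confirm that the product $j(\g,z)^{-2k}\cdot(2N'z+1)^{-2k}$ assembled over the new coset representatives, when combined with the $i^{-k}$ coming out of Lemma \ref{gendit}, reproduces exactly the Jacobi symbol $\lt(\frac{2Nc}{d}\rt)\ve_d^{2k}$ that defines $K_k(4m,n,2Nc)$ in (\ref{kloos}), with no extra twist factor (unlike in Lemma \ref{exp2}, where $H(m/4,n)$ carried an extra $\lt(\frac{4}{-N'c}\rt)\lt(\frac{-4}{N'c}\rt)^{-k}$). Once this cocycle calculation is checked, the remaining computation is a line-by-line repetition of Lemma \ref{exp2}.
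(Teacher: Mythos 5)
Your proposal is correct and follows essentially the same route as the paper: the paper likewise obtains the coset decomposition ``arguing analogously as in the proof of Lemma \ref{exp2}'' (i.e.\ via Kohnen's Proposition 4), arriving at the inner sum $\sum_{r\in\Z}\bigl(2Nc(\tfrac z4+r)+d\bigr)^{-k}\varphi_{m,k}\bigl(4\tfrac{a(z/4+r)+b/4}{2Nc(z/4+r)+d},s\bigr)$ weighted by $\lt(\frac{2Nc}{d}\rt)\lt(\frac{-4}{d}\rt)^k$, and then applies Lemma \ref{gendit} with $\nu=4$ and $c$ replaced by $2Nc$. The cocycle bookkeeping you flag as the main obstacle does resolve exactly as you predict — the character assembles to the summand of $K_k(4m,n,2Nc)$ with no extra twist — and this is precisely the step the paper also leaves implicit.
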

\begin{proof} Arguing analogously as in the proof of Lemma \ref{exp2}, we obtain
\begin{eqnarray*}
F|_k\eta^{\left(\frac{1}{2N'}\right)}(z,s)&=&\sum_{c>0,c\equiv1\,(2)}\sum_{d(2Nc)^*}
\lt(\frac{2Nc}{d}\rt)\lt(\frac{-4}{d}\rt)^k\\
&&\times\sum_{r\in\Z}\lt(2Nc\lt(\frac{z}{4}+r\rt)+d\rt)^{-k}
\varphi_{m,k}\lt(4\frac{a\lt(\frac{z}{4}+r\rt)+\frac{b}{4}}{2Nc\lt(\frac{z}{4}+r\rt)+d},s\rt).
\end{eqnarray*}
Now Lemma \ref{exp3} follows from Lemma \ref{gendit}.
\end{proof}

Applying Lemma \ref{Kpr} together with Theorem \ref{exp1}, Lemma \ref{exp2} and Lemma \ref{exp3}, we have

\begin{thm}\label{fourierfp}For any $m$ and $s$ satisfying $(-1)^\lambda m\equiv 0,1 \pmod 4$ and $Re(s)>1$, $F_{m,k,N}^+(z,s)$ has the Fourier expansion
$$F_{m,k,N}^+(z,s)=\mathcal{M}_{m,k}(y,s)e(mx)+\sum_{(-1)^\lambda n\equiv 0,1(4)} b_{m,k}(n,s)\mathcal{W}_{n,k}(y,s)e(nx),$$
where the coefficients $b_{m,k}(n,s)$ are given by
$$2\pi i^{-k}\sum_{c>0} \lt(1+\lt(\frac{4}{N'c}\rt)\rt)\frac{K_k(m,n,Nc)}{Nc}\times \left\{
    \begin{array}{ll}
      \displaystyle{|mn|^{\frac{1-k}{2}}J_{2s-1}\lt(\frac{4\pi\sqrt{|mn|}}{Nc}\rt)}, & mn>0, \\
      \displaystyle{|mn|^{\frac{1-k}{2}}I_{2s-1}\lt(\frac{4\pi\sqrt{|mn|}}{Nc}\rt)}, & mn<0, \\
    \displaystyle{2^{k-1}\pi^{s+\frac{k}{2}-1}|m+n|^{s-\frac{k}{2}}(Nc)^{1-2s}}, & mn=0, m+n\neq 0,
\\
    \displaystyle{2^{2k-2}\pi^{k-1}\Gamma(2s)(2Nc)^{1-2s}}, & m=n=0.        \end{array}
  \right.
$$
\end{thm}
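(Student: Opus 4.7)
The plan is to apply Lemma \ref{Kpr} with $g := F_{m,k,N}(z,s)$ and feed in the three Fourier expansions already at our disposal: Theorem \ref{exp1} for $g$ itself, Lemma \ref{exp2} for $g|_k \eta^{(-\alpha/N')}$, and Lemma \ref{exp3} for $g|_k \eta^{(1/2N')}$. The Kohnen projection formula expresses each plus-space coefficient as a sum of at most two of these Fourier coefficients (depending on the residue class of $n$ modulo $4$), so the proof reduces to substituting and collecting terms.

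First I would normalize the Whittaker components. By Lemma \ref{exp2}, the $q^{n/4}$-coefficient of $g|_k \eta^{(-\alpha/N')}$ is $c'_{m,k}(n,s)\mathcal W_{n,k}(y/4,s)$, and Lemma \ref{Kpr} substitutes $(n/4,16y)$ for $(n,y)$. A short calculation from the definition \eqref{defmnwn} gives $\mathcal W_{n/4,k}(4y,s) = 4^{1-k}\mathcal W_{n,k}(y,s)$. The contribution from Lemma \ref{exp3} at the substitution $(n,4y)$ yields $\mathcal W_{n,k}(y,s)$ directly. Hence all three pieces share the common Whittaker factor $\mathcal W_{n,k}(y,s)e(nx)$ appearing in the statement.

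The substantive step is the Kloosterman-sum reconciliation. For the $\eta^{(-\alpha/N')}$-contribution, a change of variables in the inner $\delta$-sum (following Kohnen's argument in the proof of \cite[Proposition 4]{Kohnen}) rewrites the character sum $H(m/4,n)$ as $K_k(m,n,Nc)$ multiplied by the Kronecker symbol $(4/(N'c))$, after the evident rescaling $c \mapsto 4c$ identifying the two ranges of summation. For the $\eta^{(1/2N')}$-contribution, the Kloosterman sum $K_k(4m,n,2Nc)$ at modulus $2Nc$ is reduced to $K_k(m,n,Nc)$ at modulus $Nc$ via multiplicativity in the modulus together with the evaluation of the quadratic Gauss sum at the prime~$2$, again producing the factor $(4/(N'c))$. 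Combined with the auxiliary phases $i^{-k}$, $(1-(-1)^\lambda i)2^{2\lambda-1}i^{n/4}$, $2^{\lambda-1}((-1)^\lambda n/2)$, the compensating scalar $4^{1-k}$, and the $|4mn|^{(1-k)/2}$ appearing in the Bessel prefactors, each of the two extra pieces gives the main term multiplied by $(4/(N'c))$, producing the factor $1+(4/(N'c))$ of the theorem.

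The principal obstacle is this Kloosterman-sum bookkeeping across the four Bessel cases $mn>0$, $mn<0$, $mn=0$ with $m+n \neq 0$, and $m=n=0$: the character-theoretic manipulation is uniform, but the ancillary powers of $\pi$, $c$ and $|m+n|$ differ from case to case and must be matched by hand. Once the four case checks are done, the stated expansion for $F^+_{m,k,N}(z,s)$ follows by collecting terms according to the residue class of $n$ modulo $4$ as prescribed by Lemma \ref{Kpr}.
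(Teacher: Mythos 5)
Your proposal is correct and follows essentially the same route as the paper: apply Lemma \ref{Kpr} to the three Fourier expansions from Theorem \ref{exp1}, Lemma \ref{exp2} and Lemma \ref{exp3}, normalize the Whittaker factors, and reconcile the Kloosterman sums case by case so that the extra odd-$c$ contributions from the projection combine with the main term into the factor $1+\left(\frac{4}{N'c}\right)$. The paper records exactly these two Kloosterman identities (for $n\equiv 0$ and $(-1)^{\lambda}n\equiv 1 \pmod 4$) and concludes as you do.
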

\begin{proof}
If $n\equiv 0\,\,({\rm mod}\,\,4)$, then we find that
$$\lt(1+\lt(\frac{4}{N'c}\rt)\rt)K_k(m,n,Nc)=\left\{
    \begin{array}{ll}
      \displaystyle{K_k(m,n,Nc)}, & c\equiv 0(2), \\
\\
      \displaystyle{K_k(m,n,Nc)+(1-(-1)^\lambda i)\frac{(-1)^\lambda i}{4}H\lt(\frac{m}{4},\frac{n}{4}\rt)}, & c\equiv 1(2).
    \end{array}
  \right.
$$
But if $(-1)^\lambda n\equiv 1\,\,({\rm mod}\,\,4)$, then we have
$$\lt(1+\lt(\frac{4}{N'c}\rt)\rt)K_k(m,n,Nc)=\left\{
    \begin{array}{ll}
      \displaystyle{K_k(m,n,Nc)}, & c\equiv 0(2), \\
\\
      \displaystyle{K_k(m,n,Nc)+\frac{1}{\sqrt{2}}\frac{(-1)^\lambda i}{4}K_k(4m,n,2Nc)}, & c\equiv 1(2).
    \end{array}
  \right.
$$
Theorem now follows from  Lemma \ref{Kpr}, Theorem \ref{exp1}, Lemma \ref{exp2} and Lemma \ref{exp3} altogether.
\end{proof}

\section{Weight 3/2 weakly harmonic Maass forms}

 By the theory of the resolvent kernel again, Poincar\'{e} series $F^+_{m,k,N}(z,s)$ has an analytic continuation in $s$ to  $\mathrm{Re}(s)>1/2$ except for possibly finitely many simple poles in $(1/2,1)$. In fact,  for $d\equiv 0,1 \pmod 4$ and $D\equiv 0,3 \pmod 4$,
\begin{equation}\label{for12}F_{d,1/2,4}^+(z,s)=\M_{d,1/2}(y,s)e(dx)+\sum_{-D\equiv 0,1\ (4)}b_{d,1/2}(-D,s)\W_{-D,1/2}(y,s)e(-Dx)\end{equation}
has a simple pole at $s=3/4$. Hence Duke, Imamo\={g}lu and T\'{o}th \cite{DIT} used a weak Maass form
 \begin{equation}\label{h}h_{d,1/2}(z,s):=F_{d,1/2,4}^+(z,s)-\frac{b_{d,1/2}(0,s)}{b_{0,1/2}(0,s)}F_{0,1/2,4}^+(z,s)\quad (d\neq 0)\end{equation} that is holomorphic at $s=3/4$ and set $h_{d,1/2}(z):=h_{d,1/2}(z,3/4)$ if $d\neq0$ and $h_{0,1/2}(z):=f_0(z)$ so that $h_{d,1/2}(z)$ form a basis for $H_{1/2}^!$. Furthermore, they proved that $h_{d,1/2}(z)=f_d(z)$ if $d<0$ and $\xi_{1/2}(h_{d,1/2}(z))=-2\sqrt d g_{-d}(z)\in M_{3/2}^!$  if $d>0$ \cite[Proposition 1]{DIT}.

It requires a slightly different approach to construct a basis for $H_{3/2}^!$. Let $-D\equiv 0,1 \pmod 4$ and $F_D^+(z,s):=F_{D,3/2,4}^+(z,s)$. Then by Theorem \ref{fourierfp}, the Fourier expansion of $F_D^+(z,s)$ is given by
\begin{equation}\label{for32}F_{D}^+(z,s)=\M_{D,3/2}(y,s)e(Dx)+\sum_{-d\equiv 0,3\ (4)}b_{D,3/2}(-d,s)\W_{-d,3/2}(y,s)e(-dx).\end{equation}
First, note from (\ref{mk2}) that
\begin{equation}\label{m3w}\lim_{s\to 3/4}\M_{D,3/2}(y,s)e(Dx)=\left\{
    \begin{array}{ll}
    \frac{2}{\sqrt \pi}q^D,& D\neq 0,\\
    1, & D=0.\end{array}
  \right.\end{equation}
Also, it follows from (\ref{defmnwn}), (\ref{wk2}), and (\ref{wi}) that
\begin{equation}\label{wn32}
\lim_{s\to 3/4}\W_{-d,3/2}(y,s)=\left\{
    \begin{array}{ll}
    \displaystyle{\frac{2}{\sqrt{\pi}}|d|^{\frac{1}{2}}e^{2\pi dy}}& d<0, \\
    \displaystyle{\lim_{s\to 3/4}\G(s-\frac 34)^{-1}d^{1/2}e^{2\pi dy}\G(-\frac 12,4\pi dy)}, & d>0, \\
     \displaystyle{\lim_{s\to 3/4}\G(s-\frac 34)^{-1}\frac{2}{\pi \sqrt y}}, & d=0.
     \end{array}
  \right.
\end{equation}
Now recall the symmetric property of the Kloosterman sum (\cite[Proposition 3.1]{BO1})
$$K_{\frac{3}{2}}(m,n,c)=K_{\frac{3}{2}}(n,m,c)=-iK_{\frac{1}{2}}(-m,-n,c),$$
which implies

\begin{equation}\label{sym}b_{D,3/2}(-d,s)=b_{-D,1/2}(d,s)\times\left\{
    \begin{array}{ll}
      \displaystyle{-|Dd|^{-\frac{1}{2}}}, & Dd\neq 0, \\
    \displaystyle{-2\pi^{\frac 12}|D-d|^{-\frac 12}}, & Dd=0, D-d\neq 0,
\\
    \displaystyle{-4\pi}, & D=d=0.        \end{array}
  \right.
  \end{equation}
It is proved in \cite[Lemma 3, (2.25), (2.26)]{DIT} that $b_{-D,1/2}(d,s)$ has a simple pole at $s=\frac{3}{4}$ if and only if both $d$ and $-D$ are equal to zero or non-zero squares and
\begin{equation}\label{residue12}
 \textrm{Res}_{s=3/4}b_{-D,1/2}(d,s)=\left\{\begin{array}{ll}
{\frac{3}{16\pi}},  &D=0, d=0,\\
&\\
  {\frac{3m}{2\pi}}, &D=0, d=m^2\neq 0,\\
  &\\
   {\frac{12m\sqrt{-D}}{\pi}}, &-D=\square\neq 0, d=m^2\neq 0.
  \end{array}
  \right.
\end{equation}
Accordingly, the coefficient $b_{D,3/2}(-d,s)$ has
a simple pole at $s=\frac{3}{4}$ if and only if both $-D$ and $d$ are equal to zero or non-zero squares as well, but in which case the singularity cancels with the zero of $\Gamma(s-3/4)^{-1}$ appearing in $\mathcal{W}_{-d,3/2}(y,s)$. Therefore $F_D^+(z,s)$ is holomorphic in $s$.   More precisely, if $-D$ is a non-zero square, then it follows from (\ref{wn32}), (\ref{sym}) and (\ref{residue12}) that
\begin{eqnarray}\label{bw32}
&&\lim_{s\to 3/4}b_{D,3/2}(-d,s)\W_{-d,3/2}(y,s)\cr
&&\qquad=\left\{
    \begin{array}{ll}
        \displaystyle{(\mathrm{Res}_{s=3/4}b_{D,3/2}(-m^2,s))|m|e^{2\pi m^2y}\G(-\frac 12,4\pi m^2y)}, & d=m^2\neq 0, \\
        \displaystyle{(\mathrm{Res}_{s=3/4}b_{D,3/2}(0,s))\frac{2}{\pi \sqrt y}}, & d=0,
     \end{array}
  \right.\cr
&&\qquad=\left\{
    \begin{array}{ll}
       \displaystyle{-\mathrm{Res}_{s=3/4}b_{-D,1/2}(m^2,s)|D|^{-1/2}e^{2\pi m^2y}\G(-\frac 12,4\pi m^2y)}, & d=m^2\neq 0, \\
     \displaystyle{-\mathrm{Res}_{s=3/4}b_{-D,1/2}(0,s)\frac{4}{\sqrt{\pi|D|y}}}, & d=0,
     \end{array}
  \right. \cr
&&\qquad=\left\{
    \begin{array}{ll}
       \displaystyle{-\frac{12m}{\pi}e^{2\pi m^2y}\G(-\frac 12,4\pi m^2y)}, & d=m^2\neq 0, \\
     \displaystyle{-\pi^{-3/2}\frac{6}{\sqrt{y}}}, & d=0,
     \end{array}
  \right.
\end{eqnarray}
where the last is obtained from the fact $b_{0,1/2}(d,s)=b_{d,1/2}(0,s)$.
Similarly, if $D=0$, then we have
\begin{eqnarray}\label{bw032}
\lim_{s\to 3/4}b_{0,3/2}(-d,s)\W_{-d,3/2}(y,s)
   &=&\left\{
    \begin{array}{ll}
       \displaystyle{-\frac{3m}{\sqrt\pi}e^{2\pi m^2y}\G(-\frac 12,4\pi m^2y)}, & d=m^2\neq 0, \\
     \displaystyle{-\frac{3}{2\pi\sqrt{y}}}, & d=0.
     \end{array}
  \right.
\end{eqnarray}
Thus we reach the following result.
\begin{prop}\label{prop1}
Let $D\equiv 0,3 \pmod 4$.

(1) If $D$ is positive, then $\displaystyle{F_D^+(z,3/4)=0.}$

(2) If $D$ is negative and $-D$ is not a square, then
$$F_{D}^+(z,3/4)\in M_{3/2}^!.$$

(3) If $D$ is negative and $-D$ is a non-zero square, then
$$F_{D}^+(z,3/4)-\frac{4}{\sqrt\pi}F_{0}^+(z,3/4)\in M_{3/2}^!.$$
\end{prop}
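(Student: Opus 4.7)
\emph{Proof proposal.} My plan is to evaluate the Fourier expansion (\ref{for32}) of $F_D^+(z,s)$ at $s=3/4=k/2$ term by term, using (\ref{m3w}) for the leading summand, (\ref{wn32}) for the Whittaker factors, and the meromorphy analysis of the coefficient $b_{D,3/2}(-d,s)$ supplied by (\ref{sym}) together with (\ref{residue12}). The key observation, already isolated in the discussion preceding the proposition, is that $\W_{-d,3/2}(y,s)$ carries a factor $\G(s-\tfrac34)^{-1}$ for $d\geq 0$, producing a simple zero at $s=3/4$; this zero is compensated only when $b_{D,3/2}(-d,s)$ has a simple pole at the same point, which by (\ref{sym})--(\ref{residue12}) happens exactly when both $-D$ and $d$ are zero or non-zero squares. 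Whenever this compensation occurs, the finite limit of $b_{D,3/2}(-d,s)\W_{-d,3/2}(y,s)$ at $s=3/4$ is precisely the non-holomorphic term recorded in (\ref{bw32}) and (\ref{bw032}); otherwise the term drops out of the expansion entirely.

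For case (1), $D>0$ forces $-D$ to be negative and hence not a square, so no Gamma compensation takes place and the Fourier series of $F_D^+(z,3/4)$ reduces to the purely holomorphic expression $\frac{2}{\sqrt\pi}q^D+\frac{2}{\sqrt\pi}\sum_{n>0,\, n\equiv 0,3\,(\mathrm{mod}\,4)} n^{1/2}b_{D,3/2}(n,3/4)q^n$, supported on strictly positive exponents. By (\ref{hyppp}) this is a $\G_0(4)$-invariant harmonic form; being holomorphic on $\H$ with at most exponential growth at all cusps, it lies in $M^!_{3/2}$; having empty principal part at $\infty$, it must then vanish, since by the Borcherds--Zagier basis $\{g_{D'}:D'<0\}$ of $M^!_{3/2}$ an element is determined by its principal part at $\infty$ (equivalently, the Kohnen isomorphism identifies the holomorphic plus space with $M_2(SL_2(\Z))=0$). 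For case (2), when $-D$ is not a square the same vanishing of $\G(s-\tfrac34)^{-1}$ leaves a holomorphic expansion, now with non-trivial principal part $\frac{2}{\sqrt\pi}q^D$ at $\infty$, so that $F_D^+(z,3/4)$ lands directly in $M^!_{3/2}$.

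Case (3) is where the cancellation has to be engineered. When $-D$ is a non-zero square, (\ref{bw32}) gives the non-holomorphic contributions to $F_D^+(z,3/4)$ as $-\frac{12m}{\pi}e^{2\pi m^2 y}\G(-\tfrac12,4\pi m^2 y)q^{-m^2}$ for $d=m^2\neq 0$ and $-\frac{6}{\pi^{3/2}\sqrt y}$ for $d=0$, while (\ref{bw032}) gives the corresponding contributions to $F_0^+(z,3/4)$ as $-\frac{3m}{\sqrt\pi}e^{2\pi m^2 y}\G(-\tfrac12,4\pi m^2 y)q^{-m^2}$ and $-\frac{3}{2\pi\sqrt y}$. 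Both ratios equal $\frac{4}{\sqrt\pi}$, so the combination $F_D^+(z,3/4)-\frac{4}{\sqrt\pi}F_0^+(z,3/4)$ has no non-holomorphic part, and the argument of case (2) identifies it as an element of $M^!_{3/2}$. The main obstacle of the proof is really case (1): the conclusion that $F_D^+(z,3/4)=0$ for $D>0$ rests on the triviality of the holomorphic weight $3/2$ plus space via Kohnen's isomorphism with $M_2(SL_2(\Z))=0$; everything else is bookkeeping on coefficient formulas already worked out in (\ref{bw32})--(\ref{bw032}).
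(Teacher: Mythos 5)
Your proposal is correct and follows essentially the same route as the paper: evaluate the expansion (\ref{for32}) at $s=3/4$, note that the zero of $\G(s-\frac34)^{-1}$ in $\W_{-d,3/2}$ kills all terms with $d\ge 0$ unless $b_{D,3/2}(-d,s)$ has a compensating pole (which via (\ref{sym}) and (\ref{residue12}) happens exactly when $-D$ and $d$ are both squares or zero), deduce (1) from the triviality of the holomorphic weight $3/2$ plus space, and check in (3) that the surviving non-holomorphic terms of $F_D^+$ and $F_0^+$ stand in the common ratio $4/\sqrt{\pi}$. The paper's proof is the same computation, differing only in that it records the full expansions (\ref{for32full}) and (\ref{eisen32}) explicitly and remarks that (3) can alternatively be seen by showing $\xi_{3/2}$ annihilates the combination.
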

\begin{proof} Suppose $D>0$ or $D<0$ but $-D\neq \square$. Then there is no pole at $s=3/4$ and
\begin{equation}\label{bw32p}
b_{D,3/2}(-d,3/4)\W_{-d,3/2}(y,3/4)e(-dx)=\left\{
    \begin{array}{ll}
    \displaystyle{\frac{2}{\sqrt{\pi}}b_{D,3/2}(-d,3/4)|d|^{1/2}q^{-d}}, & d<0; \\
    0, & d\geq 0
     \end{array}
  \right.
\end{equation} which yields \begin{equation*}\label{whm32n}F_{D}^+(z,3/4)=\frac{2}{\sqrt \pi}\lt(q^D+\sum_{0<-d\equiv 0,3\ (4)}b_{D,3/2}(-d,3/4)|d|^{1/2}q^{-d}\rt).\end{equation*} This proves (2) and (1) as well, because there is no weight 3/2 cusp form satisfying the plus space condition.
For $-D=\square$, it follows from (\ref{m3w}), (\ref{wn32}), (\ref{bw32}), (\ref{bw032}) and (\ref{bw32p}) that
\begin{eqnarray}\label{for32full}F_{D}^+(z,3/4)= \frac{2}{\sqrt \pi}q^D-\pi^{-3/2}\frac{6}{\sqrt{y}}+\sum_{0<-d\equiv 0,3\ (4)}\frac{2}{\sqrt \pi}b_{D,3/2}(-d,3/4)|d|^{1/2}q^{-d}\cr
-\frac{12}{\pi} \sum_{0<d= \square}\sqrt d \G(-\frac 12,4\pi |d|y)q^{-d}\quad \textrm{if}\quad 0\neq -D=\square\end{eqnarray}
and
\begin{eqnarray}\label{eisen32}F_{0}^+(z,3/4)= 1-\frac{3}{2\pi\sqrt y}+\sum_{0<-d\equiv 0,3\ (4)}\frac{2}{\sqrt \pi}b_{0,3/2}(-d,3/4)|d|^{1/2}q^{-d}\cr
-\frac{3}{{\sqrt \pi}}\sum_{0<d= \square} \sqrt d\G(-\frac 12,4\pi |d|y)q^{-d},\quad \textrm{if}\quad D=0,\end{eqnarray} which proves (3). Alternatively, (3) can be proved by showing $\xi_{3/2}(F_{D}^+(z,3/4)-\frac{4}{\sqrt\pi}F_{0}^+(z,3/4))=0.$
\end{proof}
The Poincar\'{e} series $F_{0}^+(z,3/4)=-12E(z)$, where $E(z)$ is Zagier's Eisenstein series of weight $3/2$ for $\G_0(4)$ (\cite{Z1}) and
$$E(z)=\sum_{n=0}^\i H(n)q^n+\frac{1}{16\pi\sqrt y}\sum_{n=-\i}^\i\beta(4\pi n^2 y)q^{-n^2}.$$ Here $H(n)$ is the Hurwitz class number with $H(0)=\zeta(-1)=-\frac{1}{12}$ and $\beta(s)=\int_1^\i t^{-3/2}e^{-st}dt$. This yields for positive integer $n$,
\begin{equation}\label{cn}H(n)=-\frac{1}{6}\sqrt{\frac{n}{\pi}}b_{0,3/2}(n,3/4).\end{equation}
The results (2) and (3) of Proposition \ref{prop1} were proved in \cite[Proposition 3.6]{BJO} in a slightly different form. The series in (3) reflects a symmetric relation with the weight $1/2$ weakly harmonic Maass form in (\ref{h}) as $$F_{D}^+(z,3/4)-\frac{4}{\sqrt\pi}F_{0}^+(z,3/4)=
\lim_{s\to 3/4}\lt(F_D^+(z,s)-\frac{b_{D,3/2}(0,s)}{b_{0,3/2}(0,s)}F_0^+(z,s)\right).$$

In order to construct a nontrivial weakly harmonic Maass form for the case $D>0$, we consider the derivative of the Poincar\'{e} series $F_{D}^+(z,s)$ at $s=3/4$.
\begin{prop}\label{prop2} Let $0<D\equiv 0,3 \pmod 4$. Then
 \begin{align}\label{kdpf}
\pds F_D^+(z,s)|_{s=3/4}=&\displaystyle -12\frac{H(D)}{\sqrt{\pi D y}}-i\G(-\frac 12,-4\pi Dy)q^D-2\sqrt\pi i q^D\\
&+\displaystyle\sum_{{0<-d\equiv 0,3\ (4)}\atop{-d\neq D}}\frac{\partial}{\partial s}b_{D,3/2}(-d,s)|_{s=\frac{3}{4}}\frac{2\sqrt{|d|}}{\sqrt\pi}q^{-d}\cr
&+\displaystyle\sum_{0<d\equiv 0,1\ (4)}b_{D,3/2}(-d,3/4)\sqrt{|d|}\Gamma\left(-\frac{1}{2},4\pi dy\right)q^{-d}.\nonumber
 \end{align}
\end{prop}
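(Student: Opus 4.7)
The strategy is to differentiate the Fourier expansion (\ref{for32}) of $F_D^+(z,s)$ term-by-term at $s=3/4$, exploiting the vanishing $F_D^+(z,3/4)=0$ from Proposition \ref{prop1}(1). Matching Fourier coefficients in that vanishing, together with (\ref{m3w}) and the identity $\mathcal{W}_{D,3/2}(y,3/4)=D^{1/2}\mathcal{M}_{D,3/2}(y,3/4)$ that follows from (\ref{wk2}) and (\ref{mk2}), forces $b_{D,3/2}(D,3/4)=-D^{-1/2}$ and $b_{D,3/2}(-d,3/4)=0$ for every other $0<-d\equiv 0,3\pmod 4$. This greatly simplifies the surviving pieces of the derivative.

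I would then treat each Fourier index $n=-d$ in turn. For $d>0$ (so $n<0$), the factor $\Gamma(s-3/4)^{-1}$ in $\mathcal{W}_{n,3/2}(y,s)$ gives $\mathcal{W}_{n,3/2}(y,3/4)=0$, so only $b_{D,3/2}(n,3/4)\cdot\partial_s\mathcal{W}_{n,3/2}(y,s)|_{s=3/4}$ survives. Using the Laurent expansion $1/\Gamma(z)=z+O(z^2)$ together with the closed form $W_{-3/4,1/4}(y)=e^{y/2}y^{3/4}\Gamma(-1/2,y)$ from (\ref{wi}), this derivative evaluates to $\sqrt{|d|}\,e^{2\pi dy}\Gamma(-1/2,4\pi dy)$, producing the last sum in the claim. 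For $d<0$ with $-d\neq D$ the vanishing $b_{D,3/2}(-d,3/4)=0$ kills the $b\cdot\partial_s\mathcal{W}$ piece, leaving only $\partial_s b_{D,3/2}(-d,s)|_{s=3/4}\cdot\mathcal{W}_{-d,3/2}(y,3/4)$, which after (\ref{wk2}) yields the first sum. For $d=0$ a parallel Laurent argument gives $\partial_s\mathcal{W}_{0,3/2}(y,s)|_{s=3/4}=2/(\pi\sqrt{y})$; combining this with the Kloosterman symmetry $b_{D,3/2}(0,s)=b_{0,3/2}(D,s)$ from Theorem \ref{fourierfp} and Zagier's identification $F_{0,3/2,4}^+(z,3/4)=-12E(z)$ (which via (\ref{cn}) gives $b_{0,3/2}(D,3/4)=-6\sqrt{\pi/D}\,H(D)$) produces the $-12H(D)/\sqrt{\pi Dy}$ contribution.

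The main obstacle is the combined $n=D$ and $\mathcal{M}$ contribution, which must yield the $q^D$ terms. Since $b_{D,3/2}(D,3/4)=-D^{-1/2}$ is a constant in $s$, I write the relevant piece as
\begin{equation*}
\partial_s\bigl[\mathcal{M}_{D,3/2}(y,s)-D^{-1/2}\mathcal{W}_{D,3/2}(y,s)\bigr]\Big|_{s=3/4}\cdot e(Dx).
\end{equation*}
Both summands share the common prefactor $D^{-1/4}(4\pi y)^{-3/4}$, so the problem reduces to computing $\partial_s\bigl[\Gamma(2s)^{-1}M_{3/4,s-1/2}(u)-\Gamma(s+3/4)^{-1}W_{3/4,s-1/2}(u)\bigr]$ at $s=3/4$ with $u=4\pi Dy$. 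Expanding via (\ref{mwrel}),
\begin{equation*}
W_{3/4,s-1/2}(y)=A(s)M_{3/4,s-1/2}(y)+B(s)M_{3/4,1/2-s}(y),\quad A(s)=\tfrac{\Gamma(1-2s)}{\Gamma(1/4-s)},\ B(s)=\tfrac{\Gamma(2s-1)}{\Gamma(s-3/4)},
\end{equation*}
one has $A(3/4)=1$, $B(3/4)=0$, $A'(3/4)=-\psi(3/2)$, and $B'(3/4)=\sqrt{\pi}$. The identity $\psi(-1/2)=\psi(3/2)$ together with the coincidence $M_{3/4,1/4}(y)=W_{3/4,1/4}(y)=e^{-y/2}y^{3/4}$ (from (\ref{wws})) makes the digamma contributions cancel completely, collapsing the bracket to $-\sqrt{\pi}\,\Gamma(3/2)^{-1}M_{3/4,-1/4}(u)$. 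Finally Kummer's transformation and a direct series manipulation give $M(-1/2,1/2,y)=e^y-\sqrt{\pi y}\,\mathrm{erfi}(\sqrt{y})$, hence $M_{3/4,-1/4}(y)=iy^{3/4}e^{-y/2}\bigl[\tfrac12\Gamma(-1/2,-y)+\sqrt{\pi}\bigr]$ on the principal branch, and the arithmetic of the prefactors $D^{-1/4}(4\pi Dy)^{3/4}=D^{1/2}(4\pi y)^{3/4}$ collapses the combined expression to $-i\Gamma(-1/2,-4\pi Dy)q^D-2\sqrt{\pi}iq^D$, as claimed.

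The hardest step is this last Whittaker computation: it depends on the delicate interplay between (\ref{mwrel}), (\ref{wws}), (\ref{wi}), the digamma identity $\psi(-1/2)=\psi(3/2)$, and the analytic continuation of $\Gamma(-1/2,z)$ through the negative real axis via $\mathrm{erfi}(\sqrt{y})$. The remaining ingredients---the Hurwitz class numbers, Zagier's Eisenstein series, and the Kloosterman symmetry---are essentially recalled from the preceding sections.
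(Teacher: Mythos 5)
Your proposal is correct and follows essentially the same route as the paper's proof: termwise differentiation of (\ref{for32}) at $s=3/4$, the Laurent expansion of $\Gamma(s-3/4)^{-1}$ for the $n\le 0$ terms, and reduction of the $q^D$ contribution to $M_{\frac34,-\frac14}$ via (\ref{mwrel}) and Kummer's transformation, the only (harmless) variation being that you extract $b_{D,3/2}(D,3/4)=-D^{-1/2}$ and the vanishing of the other coefficients of positive index directly from $F_D^+(z,3/4)=0$ rather than from (\ref{sym}) and $h_{d,1/2}=f_d$ as the paper does. One bookkeeping slip worth fixing: the common prefactor of $\mathcal{M}_{D,3/2}(y,s)$ and $-D^{-1/2}\mathcal{W}_{D,3/2}(y,s)$ is $(4\pi Dy)^{-3/4}=D^{-3/4}(4\pi y)^{-3/4}$, not $D^{-1/4}(4\pi y)^{-3/4}$ --- with the correct power the spurious $D^{1/2}$ in your final reassembly disappears and the stated $q^D$ terms come out exactly.
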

\begin{proof}
Applying (\ref{for32}), (\ref{wn32}) and Proposition \ref{prop1}(1), we have
\begin{align*}
\frac{\partial}{\partial s}F_D^+(z,s)|_{s=\frac{3}{4}}
=& \mathcal X(y)e(Dx) +\displaystyle\sum_{{n>0}\atop{n\neq D}}\frac{\partial}{\partial s}b_{D,3/2}(n,s)|_{s=\frac{3}{4}}\frac{2\sqrt{n}}{\sqrt\pi}q^n\\
&+\displaystyle\sum_{n\leq 0}b_{D,3/2}(n,3/4)\pds\mathcal W_{n,3/2}(y,s)|_{s=\frac{3}{4}}e(nx).
\end{align*}
where $\mathcal X(y)=\displaystyle\frac{\partial}{\partial s}\left(\mathcal M_{D,3/2}(y,s)+b_{D,3/2}(D,s)\mathcal W_{D,3/2}(y,s)\right)|_{s=\frac{3}{4}}$.
It follows from (\ref{defmnwn}), (\ref{wi}) and the fact $W_{\mu,\nu}=W_{\mu,-\nu}$ that
$\pds\mathcal W_{n,3/2}(y,s)|_{s=\frac{3}{4}}=\sqrt{|n|}e^{2\pi|n|y}\Gamma\left(-\frac{1}{2},4\pi|n|y\right)$ if $n<0$. Also, $\pds\mathcal W_{0,3/2}(y,s)|_{s=\frac{3}{4}}=\frac{2}{\pi\sqrt y}.$ Hence
\begin{align}\label{kdp}
\frac{\partial}{\partial s}F_D^+(z,s)|_{s=\frac{3}{4}}
=& \mathcal X(y)e(Dx) +\displaystyle\sum_{{n>0}\atop{n\neq D}}\frac{\partial}{\partial s}b_{D,3/2}(n,s)|_{s=\frac{3}{4}}\frac{2\sqrt{n}}{\sqrt\pi}q^n\cr
&+\displaystyle b_{D,3/2}(0,3/4)\frac{2}{\pi\sqrt y}+\displaystyle\sum_{n< 0}b_{D,3/2}(n,3/4)\sqrt{|n|}\Gamma\left(-\frac{1}{2},4\pi|n|y\right)q^n.
\end{align}
Note from (\ref{sym}) and the fact $h_{d,1/2}=f_d$ for $d<0$ that $b_{D,3/2}(D,3/4)=-\frac1 D b_{-D,1/2}(-D,3/4)=-\frac{1}{\sqrt D}$ and $M_{\frac 34,\frac14}(y)=W_{\frac 34,\frac14}(y)=W_{\frac 34,-\frac14}(y)=y^{3/4}e^{-y/2}$. We then have
\begin{align*}\mathcal X(y)&=\left[\pds\left(\G(2s)^{-1}(4\pi Dy)^{-3/4}M_{\frac 34,s-1/2}(4\pi Dy)-\G(s+\frac34)^{-1}(4\pi Dy)^{-3/4}W_{\frac 34,s-1/2}(4\pi Dy)\right)\right]_{s=\frac 34}\\
&=\mathcal A+\frac{2}{\sqrt \pi}(4\pi Dy)^{-3/4}\mathcal B,\end{align*}
where
$$\mathcal A=\frac{d}{ds}\left(\G(2s)^{-1}-\G(s+\frac34)^{-1}\right)(4\pi Dy)^{-3/4}W_{\frac 34,\frac14}(4\pi Dy)=-\frac{2}{\sqrt \pi}(2-\gamma-2\ln2)e^{-2\pi Dy}$$ for the Euler constant $\g$ and
\begin{align*}\mathcal B&=\left[\pds\left( M_{\frac 34,s-1/2}(4\pi Dy)- W_{\frac 34,s-1/2}(4\pi Dy)\right)\right]_{s=\frac 34}\\
&=\left[\pds\left( M_{\frac 34,s-1/2}(4\pi Dy)-\frac{\G(1-2s)}{\G(1/4-s)} M_{\frac 34,s-1/2}(4\pi Dy)-\frac{\G(2s-1)}{\G(s-3/4)} M_{\frac 34,1/2-s}(4\pi Dy)\right)\right]_{s=\frac 34}\\
&=\left[-\frac{d}{ds}\frac{\G(1-2s)}{\G(1/4-s)}\right]_{s=\frac 34} M_{\frac 34,\frac14}(4\pi Dy)-\left[\frac{d}{ds}\frac{\G(2s-1)}{\G(s-3/4)}\right]_{s=\frac 34} M_{\frac 34,-\frac 14}(4\pi Dy)\\
&=(2-\gamma-2\ln2)M_{\frac 34,\frac14}(4\pi Dy)-\sqrt \pi M_{\frac 34,-\frac 14}(4\pi Dy)\\
&=(2-\gamma-2\ln2)(4\pi Dy)^{3/4}e^{-2\pi Dy}-\sqrt \pi M_{\frac 34,-\frac 14}(4\pi Dy).
\end{align*}
The second equality above holds by (\ref{mwrel}). But by (\ref{Kummer}), (\ref{hypM}), (\ref{hypU}) and the transformation $M(a,b,z)=e^zM(b-a,b,-z)$,
\begin{align*}
M_{\frac 34,-\frac 14}(4\pi Dy)&=e^{-2\pi Dy}(4\pi Dy)^{1/4}M(-\frac 12,\frac 12,4\pi Dy)\\
&=e^{2\pi Dy}(4\pi Dy)^{1/4}M(1,\frac 12,-4\pi Dy)\\
&=e^{2\pi Dy}(4\pi Dy)^{1/4}\left(\frac 12 U(1,\frac 12,-4\pi Dy)+\sqrt\pi(-4\pi Dy)^{1/2} M(\frac 32,\frac 32,-4\pi Dy)\right)\\
&=\frac 12e^{2\pi Dy}(4\pi Dy)^{1/4} U(1,\frac 12,-4\pi Dy)+\sqrt\pi i (4\pi Dy)^{3/4} e^{-2\pi Dy}\\
&=\frac i2e^{-2\pi Dy}(4\pi Dy)^{3/4} \G(-\frac 12,-4\pi Dy)+\sqrt\pi i (4\pi Dy)^{3/4} e^{-2\pi Dy},
\end{align*}
where the last line is obtained from $\G(s,z)=e^{-z}z^sU(1,1+s,z)$. Hence,
$$\mathcal X(y)e(Dx)=-i\G(-\frac 12,-4\pi Dy)q^D-2\sqrt\pi i q^D,$$
and finally we establish (\ref{kdpf}) by applying  (\ref{sym}) and (\ref{cn}).
 \end{proof}

In general, a harmonic weak Maass form has a Fourier expansion
$$h(z)=\sum_{n\gg-\i}c_h^+(n)q^n+c_h^-(0)y^{1-k}+\sum_{0\neq n\ll \i}c_h^-(n)\G(1-k,-4\pi ny)q^n$$ so that
 \begin{equation}\label{sh}\xi_k(h)=(1-k)\overline{c_h^-(0)}-\sum_{0\neq n\ll \i}\overline{c_h^-(n)(-4\pi n)^{1-k}}q^{-n}.\end{equation}
This implies with (\ref{kdpf}) that
\begin{align}\label{shpdsf}
\xi_{3/2}\left(\frac{\partial}{\partial s}F_D^+(z,s)|_{s=\frac{3}{4}}\right)
&= {\frac{6H(D)}{\sqrt{\pi D}}}+\frac{1}{\sqrt{4\pi D}}q^{-D} -\frac{1}{\sqrt{4\pi}}\displaystyle\sum_{0<d\equiv 0,1\ (4)}\overline{b_{D,3/2}(-d,3/4)}q^d\cr
&={\frac{6H(D)}{\sqrt{\pi D}}}+\frac{1}{\sqrt{4\pi D}}\left(q^{-D} +\displaystyle\sum_{0<d\equiv 0,1\ (4)}\overline{b_{-D,1/2}(d,3/4)}\frac{1}{\sqrt{|d|}}q^d\right).
\end{align}
Using (\ref{h}), (\ref{sym}) and (\ref{residue12}), we may write the weight $1/2$ weakly harmonic Maass form
$$h_{d,1/2}(z)=\M_{d,1/2}(y,3/4)e(dx)-\delta_{\square,d}8\sqrt{dy}+\sum_{0\neq n\equiv 0,1\ (4)}a_d(n)\W_{n,1/2}(y,3/4)e(nx),$$ where
$$a_d(n)=b_{d,1/2}(n,3/4)-\delta_{\square,n}\frac{4\sqrt{nd}}{\sqrt\pi}b_{-d,3/2}(0,3/4).$$
Since $h_{d,1/2}(z)=f_d(z)$, which is given in (\ref{fn}) if $d<0$ (\cite[Proposition 1]{DIT}), we can observe using (\ref{cn}) that
\begin{align}\label{shf}
\xi_{3/2}\left(\frac{\partial}{\partial s}F_D^+(z,s)|_{s=\frac{3}{4}}\right)=\frac{1}{\sqrt{4\pi D}}f_{-D}(z)+{\frac{6H(D)}{\sqrt{\pi D}}}\theta(z),\end{align}
where $\theta(z)=\sum_{n\in\Z}q^{n^2}.$ We are now ready to construct a basis for the space of weight $3/2$ weakly harmonic Maass forms.

\begin{thm}\label{main1} For each $D\equiv 0,3 \pmod 4$, let us define
\begin{equation*}h_{D,3/2}(z):=\left\{
    \begin{array}{ll}
      \displaystyle{\pds F_D^+(z,s)|_{s=3/4}-8\sqrt{\frac{\pi}{D}}H(D)F_0^+(z,3/4)}, & D>0, \\
      \displaystyle{F_{D}^+(z,3/4)-\frac{4}{\sqrt\pi}F_{0}^+(z,3/4)}, & -D=\square,\\
    \displaystyle{F_D^+(z,3/4)}, & D=0\quad \textrm{or}\quad 0<-D\neq \square.
 \end{array}
  \right.\end{equation*}
 The set $\{h_{D,3/2}|D\equiv 0,3 \pmod 4 \}$ forms a basis for $H_{3/2}^!$. When $D<0$, $h_{D,3/2}(z)$  forms a unique basis for $M_{3/2}^!$ of the form given in (\ref{gm}), because
 $$h_{D,3/2}(z)=\frac{2}{\sqrt \pi}g_D(z).$$
When $D\geq 0$,  $h_{D,3/2}(z)$ is a unique weakly harmonic Maass form with bounded holomorphic part satisfying
 \begin{equation}\label{shadowf} \xi_{3/2}\left(h_{D,3/2}(z)\right)=\frac{1}{2\sqrt{\pi D}}f_{-D}(z)\ (D>0)\quad \textrm{and}\quad\xi_{3/2}\left(h_{0,3/2}(z)\right)=\frac{3}{4{\pi}}f_{0}(z),\end{equation}
 where $f_d$ is given by (\ref{fn}).
\end{thm}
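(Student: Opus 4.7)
The plan is to verify each defining formula for $h_{D,3/2}$ separately---confirming membership in $H^!_{3/2}$, the claimed shadow, and boundedness of the holomorphic part---and then deduce the basis property and uniqueness from the shadow map $\xi_{3/2}:H^!_{3/2}\to M^!_{1/2}$. For $D<0$, Proposition \ref{prop1} already places $h_{D,3/2}$ in $M^!_{3/2}$. I would substitute (\ref{m3w}) and (\ref{bw32p}) into (\ref{for32}) to obtain the explicit expansion
\[
h_{D,3/2}(z) = \frac{2}{\sqrt\pi}\lt(q^D + \sum_{0<-d\equiv 0,3\,(4)}b_{D,3/2}(-d,\tfrac34)|d|^{1/2}q^{-d}\rt),
\]
where, in the case $-D=\square$, the subtraction of $F_0^+(z,3/4)$ cancels the residual $y^{-1/2}$ and $\G(-\tfrac12,4\pi\cdot y)$ contributions visible in (\ref{for32full}). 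This expansion matches $(2/\sqrt\pi)g_D$ in (\ref{gm}), and uniqueness of $g_D$ follows because Kohnen's isomorphism gives $M_{3/2}^+(4)\cong M_2(1)=0$, so the principal part determines the form.

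For $D=0$, I would apply (\ref{sh}) with $k=3/2$ to the non-holomorphic part of $F_0^+(z,3/4)$ recorded in (\ref{eisen32}): the coefficient $-\tfrac{3}{2\pi}$ of $y^{-1/2}$ produces the constant $(-\tfrac12)(-\tfrac{3}{2\pi})=\tfrac{3}{4\pi}$, and each coefficient $-\tfrac{3}{\sqrt\pi}\sqrt d$ of $\G(-\tfrac12,4\pi dy)q^{-d}$ at $d=m^2>0$ produces $\tfrac{3}{2\pi}q^{m^2}$, yielding $\xi_{3/2}(h_{0,3/2})=\tfrac{3}{4\pi}\sum_{m\in\Z}q^{m^2}=\tfrac{3}{4\pi}f_0$. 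For $D>0$, formula (\ref{shf}) already gives
\[
\xi_{3/2}\lt(\pds F_D^+(z,s)|_{s=3/4}\rt) = \frac{1}{\sqrt{4\pi D}}f_{-D}(z) + \frac{6H(D)}{\sqrt{\pi D}}\theta(z),
\]
so the linearity of $\xi_{3/2}$ together with the $D=0$ computation shows that the scalar $8\sqrt{\pi/D}H(D)$ in the definition of $h_{D,3/2}$ is precisely what cancels the $\theta$-contribution, leaving $\xi_{3/2}(h_{D,3/2})=(2\sqrt{\pi D})^{-1}f_{-D}$. Boundedness of the holomorphic part then follows from inspection of Proposition \ref{prop2}: the only holomorphic $q$-powers present are $q^D$ and $q^{-d}$ with $-d>0$, while the two $y^{-1/2}$ contributions $-\tfrac{12H(D)}{\sqrt{\pi Dy}}$ and $-8\sqrt{\pi/D}H(D)\cdot(-\tfrac{3}{2\pi\sqrt y})$ cancel exactly.

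Linear independence of $\{h_{D,3/2}\}$ and spanning of $H^!_{3/2}$ then follow from the structure of the shadow map. Applying $\xi_{3/2}$ to a finite vanishing combination sends it to a vanishing combination in $\{f_{-D}:D\geq 0\}$, which is the Duke-Imamo\={g}lu-T\'{o}th basis of $M^!_{1/2}$, so the $D\geq 0$ coefficients vanish, and the remaining $M^!_{3/2}$-combination vanishes by distinct principal parts. Given any $h\in H^!_{3/2}$, I would expand $\xi_{3/2}(h)$ in the $f_{-D}$ basis and match scalars with the computed shadows to reduce to the case $h\in M^!_{3/2}$, already covered. Uniqueness in (\ref{shadowf}) is then immediate: two weakly harmonic Maass forms with identical shadow and bounded holomorphic part differ by an element of $M^!_{3/2}$ with no principal part at $\infty$, hence lie in $M_{3/2}^+(4)=0$. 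The most delicate step is the bookkeeping in the $D>0$ case, where Hurwitz-class-number contributions from both $\pds F_D^+|_{s=3/4}$ and $F_0^+(z,3/4)$ must be tracked carefully to pin down the correct correction factor $8\sqrt{\pi/D}H(D)$.
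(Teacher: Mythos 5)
Your proposal is correct and follows essentially the same route as the paper's proof: Proposition \ref{prop1} for $D<0$, the shadow formula (\ref{shf}) together with cancellation of the $\theta$-contribution for $D>0$, the weight-$3/2$ Eisenstein series for $D=0$, and the vanishing of holomorphic weight-$3/2$ plus-space forms for uniqueness. The only differences are cosmetic: you verify the $D=0$ shadow directly from (\ref{eisen32}) and (\ref{sh}) rather than quoting Zagier's $E(z)$, and you spell out the linear-independence and spanning argument via $\xi_{3/2}$ that the paper leaves implicit.
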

\begin{proof}
As it satisfies
\begin{equation}\label{hypppd}
\Delta_{3/2} \pds F_{D}^+(z,s)=\lt(s-\frac{3}{4}\rt)\lt(\frac 14-s\rt)\pds F_{D}^+(z,s),\end{equation} $h_{D,3/2}(z)$ is clearly a weakly harmonic Maass form for each $D$.  Furthermore, Proposition \ref{prop1} and its proof imply that $h_{D,3/2}(z)=\frac{2}{\sqrt \pi}g_D \in M_{3/2}^!$ when $D<0$ and the weight $3/2$ non-holomorphic Eisenstein series $h_{0,3/2}(z)=-12E(z)$ is a well known weakly harmonic Maass form having
\begin{equation}\label{shzero}\xi_{3/2}(h_{0,3/2}(z))=-12(-\frac{1}{16\pi})\theta(z)=\frac{3}{4\pi}f_0(z).\end{equation}
For $D>0$, it follows from the definition of $h_{D,3/2}(z)$, (\ref{sh}), (\ref{shf}) and (\ref{shzero}) that $$\xi_{3/2}\left(h_{D,3/2}(z)\right)=\frac{1}{2\sqrt{\pi D}}f_{-D}(z).$$
Finally, the uniqueness is a consequence of the fact that the only holomorphic modular form of weight $3/2$ satisfying the plus space condition is zero.
\end{proof}

\begin{proof}[Proof of Theorem \ref{mainmock}] We take the holomorphic part of $2\sqrt{\pi D}h_{D,3/2}(z)$ as $g_D(z)$ if $D>0$ and the holomorphic part of $\displaystyle{\frac{4\pi}{3}h_{0,3/2}(z)}$  as $g_0(z)$. Then Theorem \ref{mainmock} follows immediately from Theorem \ref{main1}.
\end{proof}


\begin{thebibliography}{99}
\bibitem{AS}
M.~Abramowitz and I.~Stegun, \emph{Pocketbook of mathematical functions}, Verlag Harri Deutsch 1984.
%
\bibitem{BKR}
K.~Bringmann, B.~Kane and R.~Rhoades,
\emph{Duality and differential operators for harmonic Maass forms}, Developments in Mathematics, Ehrenpreis Memorial Volume, Springer, to appear.
%
\bibitem{BO1}
K.~Bringmann and K.~Ono,
\emph{Arithmetic properties of coefficients of half-integral weight Maass-Poincar\'e series}, Math.~Ann.~
\textbf{337} (2007), 591--612.
%
\bibitem{BO2}
K.~Bringmann and K.~Ono,
\emph{Lifting elliptic cusp forms to Maass forms with an application to partitions}, Proc. Natl. Acad. Sci. USA, \textbf{104} (2007), 3725--3731.
%
\bibitem{Bru}
R.~Bruggeman, \emph{Harmonic lifts of modular forms}, arXiv:1206.5118v1.
%
\bibitem{Br}
J.~H.~Bruinier, \emph{Borcherds products on $O(2,\ell)$ and Chern classes of Heegner divisors}, Springer Lect. Notes, 1780, Springer-Verlag, Berlin, 2002.
%
\bibitem{BF}
J.~H.~Bruinier and J.~Funke, \emph{On two geometric theta lifts}, Duke Math.~J.~\textbf{125} (1) (2004), 45--90. 

%
\bibitem{BJO}
J.~H.~Bruinier, P.~Jenkins, and K.~Ono, \emph{Hilbert class
polynomials and traces of singular moduli},  Math.~Ann.~
\textbf{334} (2006), 373--393.
%
%\bibitem{BOR}
%J.~H.~Bruinier, K.~Ono, and R.~Rhoades, \emph{Differential operators for harmonic weak Maass forms and the vanishing of Hecke eigenvalues},  Math.~Ann.~ \textbf{342} (3) (2008), 673--693.
%
\bibitem{DIT}
W.~Duke, \"{O}.~Imamo\={g}lu, and \'{A}.~T\'{o}th, \emph{Cycle integrals of the $j$-function and mock modular forms}, Ann.~Math. \textbf{173} (2) (2011), 947--981.
%
\bibitem{DIT2}
W.~Duke, \"{O}.~Imamo\={g}lu, and \'{A}.~T\'{o}th, \emph{Weight 2 weakly harmonic Maass forms}, Preprint.
%
\bibitem{F}
J.~Fay,
\emph{Fourier coefficients of the resolvent for a Fuchsian group}, J.~reine angew.~Math. \textbf{294} (1977), 143--203.
%
\bibitem{H}
D.~A.~Hejhal, \emph{The Selberg trace formula for $PSL_2(\mathbb{R})$}, Springer Lect. Notes, 1001, Springer-Verlag, Berlin, 1983.
%
\bibitem{JKK-sesqui}
D.~Jeon, S.-Y.~Kang, and C.H.~Kim, \emph{Cycle integrals of a sesqui-harmonic Maass form of weight zero}, Preprint.
%
\bibitem{KK}
S.-Y.~Kang and C.~H.~Kim, \emph{Arithmetic properties of traces of singular moduli on congruence subgroups }, Int.~J.~of Number Theory, \textbf{6} (8) (2010) 1755--1768.
%
\bibitem{Kohnen}
W.~Kohnen, \emph{Fourier coefficients of modular forms of half-integral weight},
Math.~Ann. \textbf{271} (1985), 237--268.
%
\bibitem{MP}
A.~Miller and A.~Pixton, \emph{Arithmetic traces of non-holomorphic modular invariants}, Int.~J.~Number Theory \textbf{6} (1) (2010), 69--87.
%
\bibitem{Neun}
H.~Neunh\"{o}ffer, \emph{\"{U}ber die analytische Fortsetzung von Poincarreihen}, (German) S.-B. Heidelberger Akad.~Wiss.~Math.-Natur.~Kl. 1973, 33--90.
%
\bibitem{Niebur}
D.~Niebur, \emph{A class of nonanalytic automorphic functions}, Nagoya Math.~J. \textbf{52} (1973), 133--145.
%
\bibitem{Zagier}
D.~Zagier, \emph{Traces of singular moduli}, Motives, polylogarithms and Hodge theory, Part I, Irvine, CA, 1998, International Press Lecture Series 3, Part I (Int. Press, Somerville, MA, 2002) 211–-244.
%
\bibitem{Z1}
D.~Zagier, \emph{Ramanujan's mock theta functions and their applications},  S\'{e}minaire Bourbaki 60\`{e}me ann\'{e}e (986) (2006-2007), http://www.bourbaki.ens.fr/TEXTES/986.pdf.
%
%\bibitem{Z2}
%D.~Zagier, \emph{Nombres de classes et formes modulaires de poids 3/2}, Comptes Rendus Hebdomadaires des Séances de l'Académie des Sciences. Séries a et B \textbf{281} (21): (1975) Ai, A883–A886



\end{thebibliography}
\end{document}